\newcommand{\typeA}{\ensuremath{\alpha}} 
\newcommand{\typeB}{\ensuremath{\beta}}
\providecommand{\keywords}[1]{\textbf{Keywords:} #1}
\newcommand{\ignore}[1]{}
\newcommand{\mytodo}[1]{}
\newcommand{\poset}{\ensuremath{P}}
\newcommand{\perm}{\ensuremath{v}}
\newcommand{\pattern}{\ensuremath{w}}
\newcommand{\otherpattern}{\ensuremath{y}}
\newcommand{\someelt}{\ensuremath{e}}
\newcommand{\elt}[2]{\ensuremath{\someelt_{#1,#2}}}
\newcommand{\otherelt}{\ensuremath{g}}
\newcommand{\toothlen}{\ensuremath{t}}
\newcommand{\tootheltidx}{\ensuremath{\tau}}
\newcommand{\numteeth}{\ensuremath{s}}
\newcommand{\spineeltidx}{\ensuremath{\sigma}}
\newcommand{\numelts}{\ensuremath{n}}
\newcommand{\plaincomb}{\ensuremath{K}}
\newcommand{\comb}[2]{\ensuremath{\plaincomb_{{#1},{#2}}}}
\newcommand{\combA}[2]{\ensuremath{\comb{#1}{#2}^{\typeA}}}
\newcommand{\combB}[2]{\ensuremath{\comb{#1}{#2}^{\typeB}}}
\newcommand{\tmcomb}{\comb{\numteeth}{\toothlen}}
\newcommand{\plainunevencomb}{\ensuremath{U}}
\newcommand{\unevencomb}[2]{\ensuremath{\plainunevencomb_{#1, #2}}}
\newcommand{\unevencombA}[2]{\ensuremath{\unevencomb{\mathsf{numteeth} = #1}{#2}^{\typeA}}}
\newcommand{\unevencombB}[2]{\ensuremath{\unevencomb{\mathsf{toothlen} = #1}{#2}^{\typeB}}}
\newcommand{\parentnode}{\ensuremath{r}}
\newcommand{\someint}{\ensuremath{c}}
\newcommand{\someotherint}{\ensuremath{d}}
\newcommand{\children}{\ensuremath{C}}
\newcommand{\numchildren}{\ensuremath{k}}
\newcommand{\numexts}[1]{\ensuremath{E(#1)}}
\newcommand{\numavexts}[2]{\ensuremath{A_{#1}(#2)}}
\newcommand{\observedval}[1]{#1}
\newcommand{\extrapval}[1]{#1}
\newcommand{\suggestedval}[1]{\textbf{ #1}}
\newcommand{\proventable}[1]{}
\begin{document}

\newcommand{\YourTitle}{Pattern Avoidance in Extensions of Comb-Like Posets}
\newcommand{\YourRunningTitle}{\YourTitle}
\newcommand{\FirstAuthor}{Sophia Yakoubov}
\newcommand{\AuthorsEmails}{\texttt{sonka89@mit.edu}}
\newcommand{\AuthorsAddress}{MIT}

\title{
\includegraphics[scale=0.25]{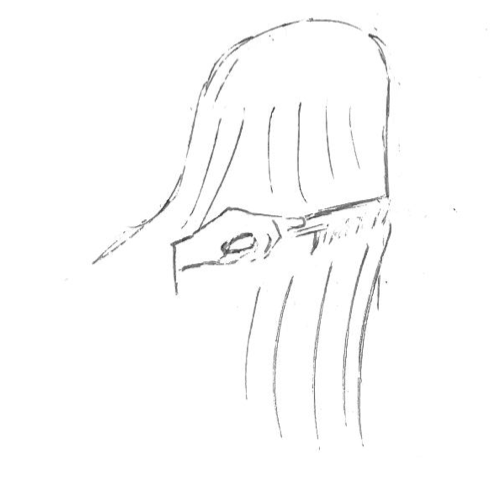} \\
\YourTitle
}
\author{
\FirstAuthor}
\date{
\AuthorsEmails\\\vspace{6pt}
\AuthorsAddress
} 

\maketitle

\newtheorem{thm}{Theorem}
\newtheorem{conjecture}{Conjecture}
\theoremstyle{definition} \newtheorem{definition}{Definition} 

\newenvironment{customlegend}[1][]{%
    \begingroup
    \csname pgfplots@init@cleared@structures\endcsname
    \pgfplotsset{#1}%
}{%
    \csname pgfplots@createlegend\endcsname
    \endgroup
}%

\def\addlegendimage{\csname pgfplots@addlegendimage\endcsname}

\begin{abstract}

This paper investigates pattern avoidance in linear extensions of particular partially ordered sets (posets). Since the problem of enumerating pattern-avoiding linear extensions of  posets without any additional restrictions is a very hard one, we focus on the class of posets called \emph{combs}. 
A comb consists of a fully ordered \emph{spine} and several fully ordered \emph{teeth}, where the first element of each tooth coincides with a corresponding element of the spine.
We consider two natural assignments of integers to elements of the combs; we refer to the resulting integer posets as \emph{type-{\typeA}} and \emph{type-{\typeB}} combs. 
In this paper, we enumerate the linear extensions of type-{\typeA} and type-{\typeB} combs that avoid some of the length-3 patterns $\pattern \in S_3$. Most notably, we shown the number of linear extensions of type-{\typeB} combs that avoid $312$ to be the same as the number $\frac{1}{\numteeth \toothlen + 1}{{\numteeth(\toothlen+1)}\choose{\numteeth}}$ of $(\toothlen+1)$-ary trees on {\numteeth} nodes, where {\toothlen} is the length of each tooth and {\numteeth} is the length of the comb spine or, equivalently, the number of its teeth. We also investigate the enumeration of linear extensions of type-{\typeA} and type-{\typeB} combs that avoid multiple length-3 patterns simultaneously.

\end{abstract}

\keywords{partially ordered sets, 
pattern avoidance,
combs}

\section{Introduction}

This work introduces a new family of enumeration problems.
We consider enumerating permutations with two separate constraints. 
The first of these is that the permutation avoids a given pattern, as described in Section \ref{sec:pa}.
The second of these is that the relative ordering of certain elements is fixed, i.e. the permutation is a \emph{linear extension of a partially ordered set (poset)}, as described in Section \ref{sec:posets}.

This is an incredibly general problem. 
In particular, if we set the poset under consideration to be the trivial poset with no minimal relations (whose linear extensions include all permutations of $\{1, 2, \ldots, \numelts\}$),
then the problem becomes that of enumerating all integer permutations avoiding certain patterns. 
The topic of pattern avoidance in integer permutations 
is a very active area of exploration in combinatorics. 
Enumerating permutations avoiding certain patterns (most notably the patterns $1324$ and $4231$) has been a longstanding and much-worked-upon open problem.

However, we chose to focus not on the trivial poset, but on a different category of posets called \emph{combs}, described in Sections \ref{sec:combs} and \ref{sec:abcombs}. 
Characterizing the subset of comb linear extensions avoiding certain patterns is a much more approachable task than doing so for general posets, and it is a good first step in the direction of studying this space. 

Section \ref{sec:background} provides some background on the topics of pattern avoidance and partially ordered sets.
Sections \ref{sec:combs} and \ref{sec:abcombs} define combs, and describe the two different assignments of integers to comb elements which we consider.
Sections \ref{sec:numavexts312combB}, \ref{sec:avalphaone}, \ref{sec:avbetaone}, \ref{sec:avalphatwo} and \ref{sec:avbetatwo} detail our results, which deal with the avoidance of all possible length-3 patterns in comb linear extensions, as well as with the avoidance of two such patterns at a time.
\section{Background}
\label{sec:background}

This paper explores the enumeration of a certain type of permutation called a \emph{pattern-avoiding comb linear extension}.
We build on several popular combinatorial concepts, such as \emph{pattern avoidance} and \emph{posets}, which will be introduced briefly in this section. 
If these concepts are already familiar to you, then you can safely skip Sections \ref{sec:pa} and \ref{sec:posets}. 
However, \emph{combs}, which are described in Sections \ref{sec:combs} and \ref{sec:abcombs}, are fairly specific to this paper.

\subsection{Pattern avoidance} 
\label{sec:pa}

Let {\pattern} be a permutation (or ordering) of the integers $\{1, \ldots, m\}$. The set of all permutations of $\{1, \ldots, m\}$ is denoted $S_m$, so we can write $\pattern \in S_m$. 
Another permutation $\perm \in S_n$ (where $n \geq m$) is said to \emph{contain} {\pattern} if there is a subsequence of {\perm} that has the same relative order as {\pattern}; 
it is said to be \emph{{\pattern}-avoiding} if no subsequence of {\perm} has the same relative order as {\pattern}. 
Note that the subsequences we consider do not necessarily have to be consecutive.

For instance, let $\perm = 31524$. {\perm} avoids $321$ but contains $123$, since $124$ is a subsequence of {\perm} has the same relative order as $123$. 

MacMahon \cite{MacMahon} postulated and Knuth and Rotem \cite{Knuth, Rotem} proved that for any $\pattern \in S_3$, there are $C_n$ {\pattern}-avoiding permutations $\perm \in S_n$, where $C_n = \frac{1}{n+1}{{2n}\choose{n}}$ is the $n$th Catalan number. 
In general, pattern avoidance is a very active area of combinatorics. It is discussed in depth in Chapters 4 and 5 of \textit{Combinatorics of Permutations} by Miklos B\'ona \cite{Bona}.

\subsection{Posets} 
\label{sec:posets}

Let {\poset} be a finite \emph{partially ordered set}, or \emph{poset}. 
For poset notation and terminology, we follow Chapter 3 of Enumerative Combinatorics, Volume 1 by Richard Stanley \cite{EC1}.
A \emph{linear extension} of poset {\poset} is a permutation of {\poset}'s elements that respects {\poset}'s relations; so, if {\poset} has the relation $a \leq_{\poset} b$, then in any linear extension of {\poset}, $a$ will precede $b$.
To illustrate, if {\poset} has the Hasse diagram in Figure \ref{fig:exampleposet}, then the linear extensions of {\poset} are $[\elt{1}{1}, \elt{1}{2}, \elt{2}{1}, \elt{2}{2}]$ and $[\elt{1}{1}, \elt{2}{1}, \elt{1}{2}, \elt{2}{2}]$.

\begin{figure} [h!]
\begin{displaymath}
    \xymatrix@!=2ex{
        & \elt{2}{2} & \\
        \elt{1}{2} \ar@{-}[ur] & & \elt{2}{1} \ar@{-}[ul] \\
        & \elt{1}{1} \ar@{-}[ur] \ar@{-}[ul] & }
\end{displaymath}
\caption{Example of a Hasse diagram of a poset}
\label{fig:exampleposet}
\end{figure}

Let {\numavexts{\pattern}{\poset}} represent the number of linear extensions of poset {\poset} which avoid {\pattern}, as described in Section \ref{sec:pa}.

\section{Combs} \label{sec:combs}

This paper focuses on a specific class of posets called \textit{combs} (Definition \ref{def:comb}). Each \emph{comb} has a \emph{spine}, which is a fully ordered set of {\numteeth} elements. Each element of the spine has a corresponding \emph{tooth} of size {\toothlen}, which is also fully ordered, and the first element of which coincides with the corresponding spinal element.

\begin{figure} [h!] 
$$\xymatrix@!=2ex{
        & & & \elt{4}{3} & & \\
        & & \elt{3}{3} & & \elt{4}{2} \ar@{-}[ul] & \\
        & \elt{2}{3} & & \elt{3}{2} \ar@{-}[ul] & & \elt{4}{1} \ar@{-}[ul] \\
        \elt{1}{3} & & \elt{2}{2} \ar@{-}[ul] & & \elt{3}{1} \ar@{-}[ul] \ar@{-}[ur] & \\
        & \elt{1}{2} \ar@{-}[ul] & & \elt{2}{1}  \ar@{-}[ur] \ar@{-}[ul] & & \\
        & &  \elt{1}{1} \ar@{-}[ur] \ar@{-}[ul] & & & }
$$
\caption{Hasse diagram of a comb with spine of length $\numteeth = 4$ and teeth of length $\toothlen = 3$}
\label{fig:comb1}
\end{figure}

In Figure \ref{fig:comb1}, elements $\elt{1}{1}$, $\elt{2}{1}$ and $\elt{3}{1}$ form the spine of the comb. 
The first tooth consists of elements $\elt{1}{1}$, $\elt{1}{2}$ and $\elt{1}{3}$, 
the second tooth consists of elements $\elt{2}{1}$, $\elt{2}{2}$ and $\elt{2}{3}$, 
and the third tooth consists of elements $\elt{3}{1}$, $\elt{3}{2}$ and $\elt{3}{3}$.

\begin{definition} \label{def:comb}
More formally, let the \emph{comb} with \emph{spine} of length {\numteeth} and \emph{teeth} of length {\toothlen}, denoted {\tmcomb}, be a poset with elements $\{\elt{\spineeltidx}{\tootheltidx}\}_{\spineeltidx \in [\numteeth], \tootheltidx \in [\toothlen]}$ such that 
\begin{itemize}
\item $\elt{1}{1} \leq_{\plaincomb} \elt{2}{1} \leq_{\plaincomb} \cdots \leq_{\plaincomb} \elt{\numteeth}{1}$, and 
\item $\elt{\spineeltidx}{1} \leq_{\plaincomb} \elt{\spineeltidx}{2} \leq_{\plaincomb} \cdots \leq_{\plaincomb} \elt{\spineeltidx}{\toothlen}$ for $1 \leq \spineeltidx \leq \numteeth$.
\end{itemize}
The spine of the comb consists of elements $\elt{1}{1}, \ldots, \elt{\numteeth}{1}$, 
and the teeth consist of elements $\elt{\spineeltidx}{1}, \ldots, \elt{\spineeltidx}{\toothlen}$ for each integer {\spineeltidx} such that $1 \leq \spineeltidx \leq \numteeth$.

\end{definition}

\subsection{Enumerating the linear extensions of comb {\tmcomb}}
Combs are a special case of posets the Hasse diagrams of which form rooted trees.
Theorem \ref{thm:knuthextenum}, stated by Donald E. Knuth \cite{Knuth}, enumerates the linear extensions of all such posets.

\begin{definition}
Let a \emph{descendant} of element {\someelt} in poset {\poset} be any element {\someelt'} which is forced to come no earlier than {\someelt} by the minimal relations of {\poset}. (Note that the descendants of {\someelt} necessarily include {\someelt} itself.)
\end{definition}

\begin{thm}
\label{thm:knuthextenum}
Let {\poset} be a poset with {\numelts} elements, the Hasse diagram of which forms a rooted tree such that the $i$th element has $d_i$ descendants. {\poset} then has $\numexts{\poset}$ linear extensions, where $\numexts{\poset}$ is defined as follows:
\[\numexts{\poset} = \frac{\numelts!}{\prod_{i \in [\numelts]} d_i}.\]
\end{thm}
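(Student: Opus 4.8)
The plan is to prove this by induction on the number of elements $\numelts$, peeling off the root of the tree. Recall that in the tree's Hasse diagram the root is the unique minimal element, and in Knuth's convention the ``descendants'' of element $i$ are $i$ together with all elements lying above it in the tree (so the root has $d = \numelts$, and a leaf has $d = 1$). For the base case $\numelts = 1$ there is exactly one linear extension and the formula gives $1!/1 = 1$.

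For the inductive step, let $\parentnode$ be the root, with children $\childnode_1, \ldots, \childnode_k$ whose subtrees $T_1, \ldots, T_k$ have sizes $m_1, \ldots, m_k$ (so $\sum_j m_j = \numelts - 1$). In any linear extension of $\poset$, the root $\parentnode$ must come first, and what follows is an interleaving of linear extensions of the $T_j$'s, since elements from different subtrees are incomparable and can be shuffled freely. Hence
\begin{equation*}
\numexts{\poset} \;=\; \binom{\numelts - 1}{m_1, m_2, \ldots, m_k} \prod_{j=1}^{k} \numexts{T_j},
\end{equation*}
the multinomial coefficient counting the ways to choose which positions among the $\numelts - 1$ slots after $\parentnode$ are filled by each subtree. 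Applying the induction hypothesis to each $T_j$, and noting that the descendant-count $d_i$ of an element $i \neq \parentnode$ is the same whether computed in $\poset$ or in the subtree $T_j$ containing it (the subtree above $i$ is unchanged), we get
\begin{equation*}
\numexts{\poset} \;=\; \frac{(\numelts-1)!}{\prod_j m_j!} \prod_{j=1}^{k} \frac{m_j!}{\prod_{i \in T_j} d_i} \;=\; \frac{(\numelts-1)!}{\prod_{i \neq \parentnode} d_i}.
\end{equation*}
Since $d_\parentnode = \numelts$, multiplying numerator and denominator by $\numelts$ yields $\numexts{\poset} = \numelts! / \prod_{i \in [\numelts]} d_i$, completing the induction.

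The only real subtlety — and the step I would state carefully — is the claim that linear extensions of $\poset$ biject with (root, then shuffle of subtree-extensions): one must check both that any such shuffle respects all of $\poset$'s relations (true because the only relations crossing between subtrees would have to pass through $\parentnode$, which is already placed first) and that every linear extension of $\poset$ arises this way (true because restricting a linear extension of $\poset$ to the elements of $T_j$ gives a linear extension of $T_j$). Everything else is the routine multinomial bookkeeping above. One should also remark on the degenerate convention issue: this argument tacitly assumes the Hasse diagram is connected (a single rooted tree with one root); if a ``rooted forest'' were intended the statement would need $\numexts{\poset} = \numelts!/\prod_i d_i$ to be adjusted by a multinomial factor, but under the stated hypothesis (``forms a rooted tree'') connectedness is given.
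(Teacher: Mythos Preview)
Your proof is correct and is the standard inductive argument for Knuth's hook-length formula for trees. Note that the paper does not actually prove this theorem: it simply states the result and cites Knuth, then immediately specializes it to combs. So there is no ``paper's own proof'' to compare against; your argument is exactly the kind of proof one would expect, and the only point worth double-checking---that $d_i$ is meant to include the element $i$ itself (so the root has $d = \numelts$ and leaves have $d = 1$)---you handled correctly and is confirmed by the paper's specialization $\numexts{\tmcomb} = (\numteeth\toothlen)!/\bigl(\numteeth!\,(\toothlen!)^{\numteeth}\bigr)$.
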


It is easy to see that the number of linear extensions of the comb {\tmcomb} is then
\[\numexts{\tmcomb} = \frac{(\numteeth \toothlen)!}{\numteeth!(\toothlen!)^{\numteeth}}.\]

\section{Type-{\typeA} and type-{\typeB} combs} 
\label{sec:abcombs}

For the rest of this paper, we focus on two specific families of combs with integer elements, both of which always have the identity permutation $[1, 2, \ldots, \toothlen \numteeth]$ as an extension.
In type-{\typeA} combs, we set $\elt{\spineeltidx}{\tootheltidx} = (\tootheltidx - 1)\numteeth + \spineeltidx$; that is, the spine holds the first {\numteeth} integers. In type-{\typeB} combs, we set $\elt{\spineeltidx}{\tootheltidx} = (\spineeltidx - 1)\toothlen + \tootheltidx$; that is, the spine holds all integers of the form $\toothlen \someint+1$ (for integer values of {\someint}) in increasing order. More formally, 

\begin{definition}
A \emph{type-{\typeA} comb} {\combA{\numteeth}{\toothlen}} with {\numteeth} teeth of length {\toothlen} has the following properties:
\begin{itemize}
\item $1 \leq_{\plaincomb} 2 \leq_{\plaincomb} \cdots \leq_{\plaincomb} \numteeth$, and 
\item $\someint \leq_{\plaincomb} \someint+\numteeth \leq_{\plaincomb} \someint+2\numteeth \leq_{\plaincomb} \cdots \leq_{\plaincomb} \someint+(\toothlen-1)\numteeth$, 
for integers {\someint} such that $1 \leq \someint \leq \numteeth$.
\end{itemize} 
\end{definition}

Several type-{\typeA} combs are shown in Figure \ref{fig:comb3}.

\begin{figure}[h!] 
$$\begin{array}{p{1.5in}p{1.5in}p{1.5in}}
\xymatrix{
       2 & \\
       & 1 \ar@{-}[ul] }
&
\xymatrix{
        & 4 & \\
        3 & & 2 \ar@{-}[ul] \\
        & 1 \ar@{-}[ur] \ar@{-}[ul] & }
&
\xymatrix{
        & & 6 & \\
        & 5 & & 3 \ar@{-}[ul] \\
        4 & & 2 \ar@{-}[ur] \ar@{-}[ul] & \\
        & 1 \ar@{-}[ur] \ar@{-}[ul] & & }\\
& & \\
\center{\combA{1}{2}} & \center{\combA{2}{2}} & \center{\combA{3}{2}}
\end{array}$$
\caption{Hasse diagram of the first few type-{\typeA} combs with teeth of length $\toothlen = 2$}
\label{fig:comb3}
\end{figure}

\begin{definition}
A \emph{type-{\typeB} comb} {\combB{\numteeth}{\toothlen}} with {\numteeth} teeth of length {\toothlen} has the following properties:
\begin{itemize}
\item $1 \leq_{\plaincomb} \toothlen+1 \leq_{\plaincomb} 2\toothlen+1 \leq_{\plaincomb} \cdots \leq_{\plaincomb} (\numteeth-1)\toothlen+1$, and 
\item $\someint \toothlen + 1 \leq_{\plaincomb} \someint \toothlen + 2 \leq_{\plaincomb} \cdots \leq_{\plaincomb} \someint \toothlen + \toothlen$,
for integers {\someint} such that $0 \leq \someint \leq \numteeth - 1$.
\end{itemize} 
\end{definition}

Several type-{\typeB} combs are shown in Figure \ref{fig:comb4}.

\begin{figure}[h!] 
$$\begin{array}{p{1.5in}p{1.5in}p{1.5in}}
\xymatrix{
       2 & \\
       & 1 \ar@{-}[ul] }
&
\xymatrix{
        & 4 & \\
        2 & & 3 \ar@{-}[ul] \\
        & 1 \ar@{-}[ur] \ar@{-}[ul] & }
&
\xymatrix{
        & & 6 & \\
        & 4 & & 5 \ar@{-}[ul] \\
        2 & & 3 \ar@{-}[ur] \ar@{-}[ul] & \\
        & 1 \ar@{-}[ur] \ar@{-}[ul] & & }\\
& & \\
\center{\combB{1}{2}} & \center{\combB{2}{2}} & \center{\combB{3}{2}}
\end{array}$$
\caption{Hasse diagram of the first few type-{\typeB} combs with teeth of length $\toothlen = 2$}
\label{fig:comb4}
\end{figure}

\subsection{Uneven combs} 
\label{sec:unevencombs}

Throughout this paper, we will need to describe the insertion of the next (the $[\numelts + 1]$st) element into a linear extension of an $\numelts$-element comb.
To do this, we will need to refer to combs whose number of elements may not be a multiple of {\toothlen}. We introduce some new notation for such \emph{uneven combs} here.

In type-{\typeA} combs, the next element is always appended to the end of an existing tooth. Let $\unevencombA{\numteeth}{\numelts}$ denote the type-{\typeA} uneven comb with {\numteeth} teeth and a total of {\numelts} elements (Figure \ref{fig:unevencombs}).
If {\numelts} is not divisible by {\numteeth}, then the last several teeth of {\unevencombA{\numteeth}{\numelts}} will contain one fewer element than the rest.
Note that $\unevencombA{\numteeth}{\toothlen \numteeth} = \combA{\numteeth}{\toothlen}$.

\begin{figure} [H] 
$$\xymatrix@!=2ex{
        & & & & 8 &  \\
        & 10 & & 7 & & 4 \ar@{-}[ul]  \\
        9 & & 6 \ar@{-}[ul] & & 3  \ar@{-}[ur] \ar@{-}[ul] & \\
        & 5 \ar@{-}[ul] & &  2 \ar@{-}[ur] \ar@{-}[ul] & & \\
        & & 1 \ar@{-}[ur] \ar@{-}[ul] & & & }
$$
\caption{Hasse diagram of $\unevencombA{4}{10}$, with two teeth of length $\toothlen = 3$ and two teeth of length $\toothlen = 2$}
\label{fig:unevencombs}
\end{figure}

In type-{\typeB} combs, the next element is always appended to the last tooth (or to the spine, if the last tooth is already of length {\toothlen}). Let $\unevencombB{\toothlen}{\numelts}$ denote the type-{\typeB} uneven comb with teeth of length {\toothlen} and a total of {\numelts} elements (Figure \ref{fig:unevencombt}). 
If {\numelts} is not divisible by {\toothlen}, then {\unevencombB{\toothlen}{\numelts}} will have an additional, shorter tooth at the end of the spine.
Note that $\unevencombB{\toothlen}{\toothlen \numteeth} = \combB{\numteeth}{\toothlen}$.

\begin{figure} [H] 
$$\xymatrix@!=2ex{
        & & 9 & & &  \\
        & 6 & & 8 \ar@{-}[ul] & & 10  \\
        3 & & 5 \ar@{-}[ul] & & 7  \ar@{-}[ur] \ar@{-}[ul] & \\
        & 2 \ar@{-}[ul] & &  4 \ar@{-}[ur] \ar@{-}[ul] & & \\
        & & 1 \ar@{-}[ur] \ar@{-}[ul] & & & }
$$
\caption{Hasse diagram of $\unevencombB{3}{11}$, with three teeth of length $\toothlen = 3$ and an additional tooth of length $\toothlen = 1$}
\label{fig:unevencombt}
\end{figure}

The remainder of this paper attempts to enumerate the {\pattern}-avoiding linear extensions of {\combA{\numteeth}{\toothlen}} and {\combB{\numteeth}{\toothlen}} for $\pattern \in S_3$.
We begin with the most interesting case, with the most complex proof: $312$-avoiding linear extensions of type-{\typeB} combs.

\section{$312$-avoidance in type-{\typeB} combs}
\label{sec:numavexts312combB}

In this section, we will enumerate the $312$-avoiding linear extensions of {\combB{\numteeth}{\toothlen}}.
More specifically, we will prove that the number of $312$-avoiding linear extensions of {\combB{\numteeth}{\toothlen}} is the same as the number of $(\toothlen + 1)$-ary trees on {\numteeth} nodes.

\begin{definition}
A $(\toothlen + 1)$-ary tree on {\numteeth} nodes is a rooted tree in which each node has at most $\toothlen + 1$ children. 
The nodes are unlabeled and only the shape of the tree is considered, as illustrated in Figures \ref{fig:threenodebinarytrees} and \ref{fig:threenodeternarytrees}. 
Specifically, note that the position of each node's children is significant (e.g. right vs. left in the binary trees of Figure \ref{fig:threenodebinarytrees}, and right vs. center vs. left in the ternary trees of Figure \ref{fig:threenodeternarytrees}).
\end{definition}

\begin{figure}[H]
\begin{center}
$\xymatrix@!=.5ex{
        & & \bullet \ar@{-}[dl] \\
        & \bullet \ar@{-}[dl] & \\
        \bullet & &}$
$\xymatrix@!=.5ex{
        & & \bullet \ar@{-}[dl] \\
        & \bullet \ar@{-}[dr] & \\
        & & \bullet}$
\hspace{3 mm}
$\xymatrix@!=.5ex{
        & \bullet \ar@{-}[dl]  \ar@{-}[dr] & \\
        \bullet & & \bullet \\
        & &}$
\hspace{3 mm}
$\xymatrix@!=.5ex{
        \bullet \ar@{-}[dr] & & \\
        & \bullet \ar@{-}[dl] & \\
        \bullet & &}$
$\xymatrix@!=.5ex{
        \bullet \ar@{-}[dr] & & \\
        & \bullet \ar@{-}[dr] & \\
        & & \bullet }$
\end{center}
\caption{All five binary trees on three nodes}
\label{fig:threenodebinarytrees}
\end{figure}

\newcommand{\thisvspace}{\vspace{7 mm}}

\begin{figure}[H]
\begin{center}
$\xymatrix@!=.5ex{
        & & \bullet \ar@{-}[dl]\\
        & \bullet \ar@{-}[dl] & \\
        \bullet & &}$
$\xymatrix@!=.5ex{
        & & \bullet \ar@{-}[dl]\\
        & \bullet \ar@{-}[d] & \\
        & \bullet &}$
$\xymatrix@!=.5ex{
        & & \bullet \ar@{-}[dl]\\
        & \bullet \ar@{-}[dr] & \\
        & & \bullet}$
\hspace{5 mm}
$\xymatrix@!=.5ex{
        & \bullet \ar@{-}[d] & \\
        & \bullet \ar@{-}[dl] & \\
        \bullet & &}$
\thisvspace
$\xymatrix@!=.5ex{
        & \bullet \ar@{-}[d] & \\
        & \bullet \ar@{-}[d] & \\
        & \bullet &}$
$\xymatrix@!=.5ex{
        & \bullet \ar@{-}[d]& \\
        & \bullet \ar@{-}[dr] & \\
        & & \bullet}$
 \hspace{5 mm}
$\xymatrix@!=.5ex{
        \bullet \ar@{-}[dr] & & \\
        & \bullet \ar@{-}[dl] & \\
        \bullet & &}$
$\xymatrix@!=.5ex{
        \bullet \ar@{-}[dr] & & \\
        & \bullet \ar@{-}[d] & \\
        & \bullet &}$
$\xymatrix@!=.5ex{
        \bullet \ar@{-}[dr] & & \\
        & \bullet \ar@{-}[dr] & \\
        & & \bullet}$
\hspace{3 mm}
\thisvspace
$\xymatrix@!=.5ex{
        & \bullet \ar@{-}[dl] \ar@{-}[d] & \\
        \bullet & \bullet & }$
$\xymatrix@!=.5ex{
        & \bullet \ar@{-}[dl] \ar@{-}[dr] & \\
        \bullet & & \bullet }$
$\xymatrix@!=.5ex{
        & \bullet \ar@{-}[d] \ar@{-}[dr] & \\
        & \bullet & \bullet }$
\end{center}
\caption{All twelve ternary trees on three nodes}
\label{fig:threenodeternarytrees}
\end{figure}

The number of $(\toothlen + 1)$-ary trees on {\numteeth} nodes is known to be $\frac{1}{\toothlen \numteeth+1}{\numteeth(\toothlen+1) \choose \numteeth}$  \cite{Aval}.

\begin{thm} 
\label{thm:main}
For $\toothlen > 1$, 
$\numavexts{312}{\combB{\numteeth}{\toothlen}} = \frac{1}{\toothlen \numteeth+1}{\numteeth(\toothlen+1) \choose \numteeth}$.
\end{thm}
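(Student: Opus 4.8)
The plan is to run a recursion on the number $\numteeth$ of teeth by peeling off the first tooth, reduce the count to a shuffling problem, extract a recursion refined by an auxiliary statistic, and then solve that recursion with a generating function that turns out to satisfy the Fuss--Catalan functional equation. Note first that the value $1$ is the unique minimum of $\combB{\numteeth}{\toothlen}$, so every linear extension begins with $1$; and since $1$ is the numerically smallest entry and sits in the first position, it can never be the (necessarily middle) smallest entry of an occurrence of $312$, so deleting it preserves $312$-avoidance. Deleting $1$ from the poset leaves the disjoint union of an increasing chain on the values $\{2,\ldots,\toothlen\}$ --- the rest of the first tooth --- with an order-isomorphic copy of $\combB{\numteeth-1}{\toothlen}$ on the values $\{\toothlen+1,\ldots,\toothlen\numteeth\}$, with every value of the chain numerically below every value of the sub-comb. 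Hence $\numavexts{312}{\combB{\numteeth}{\toothlen}}$ counts the $312$-avoiding shufflings of this $(\toothlen-1)$-element chain into the $312$-avoiding linear extensions $\rho$ of $\combB{\numteeth-1}{\toothlen}$.

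Next I would analyze these shufflings. Because the chain entries are increasing and all smaller than the entries of $\rho$, a short case check on the role a chain entry can play in an occurrence of $312$ shows that every chain entry except the largest must be placed at the very front, and that the largest chain entry may then be inserted exactly at the \emph{split points} of $\rho$: the positions $p$ at which the first $p$ entries of $\rho$ are precisely its $p$ smallest values (equivalently, $\rho$ decomposes there as a direct sum). Thus each $\rho$ contributes exactly $\mathrm{sp}(\rho)$ extensions, where $\mathrm{sp}(\rho)$ denotes its number of split points; moreover, the extension produced from $\rho$ by inserting at split point $p$ turns out to have exactly $\toothlen+k$ split points of its own, where $k$ is the number of split points of $\rho$ at or beyond $p$ (so $k\ge 1$). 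Writing $h(\numteeth,m)$ for the number of $312$-avoiding linear extensions of $\combB{\numteeth}{\toothlen}$ with exactly $m$ split points, this gives, for $\numteeth\ge 1$, $h(\numteeth,m)=\sum_{m'\ge m-\toothlen}h(\numteeth-1,m')$ when $m\ge\toothlen+1$ and $h(\numteeth,m)=0$ otherwise, with $h(0,1)=1$ and $h(0,m)=0$ for $m\ne 1$; and $\numavexts{312}{\combB{\numteeth}{\toothlen}}=\sum_m h(\numteeth,m)=\sum_m m\,h(\numteeth-1,m)$.

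Finally I would introduce the generating functions $G(z,x)=\sum_{\numteeth,m}h(\numteeth,m)\,z^\numteeth x^m$ and $F(z)=G(z,1)=\sum_\numteeth \numavexts{312}{\combB{\numteeth}{\toothlen}}\,z^\numteeth$. The recursion translates into the linear equation $G(z,x)\,(1-x+x^{\toothlen+1}z)=x(1-x)+x^{\toothlen+1}z\,F(z)$. Applying the kernel method --- substituting the power-series root $X=X(z)$ of $1-x+x^{\toothlen+1}z=0$, which by construction satisfies $X=1+zX^{\toothlen+1}$ --- annihilates the left-hand side and forces $X(1-X)+X^{\toothlen+1}z\,F(z)=0$, whence $F(z)=X(z)$ and so $F=1+zF^{\toothlen+1}$. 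This is precisely the generating function for $(\toothlen+1)$-ary trees, so Lagrange inversion yields $\numavexts{312}{\combB{\numteeth}{\toothlen}}=[z^\numteeth]F(z)=\frac{1}{\toothlen\numteeth+1}{\numteeth(\toothlen+1)\choose\numteeth}$, as claimed. The hypothesis $\toothlen>1$ is used precisely at the peeling step, where it keeps the chain non-empty, so that each $\rho$ contributes $\mathrm{sp}(\rho)$ extensions rather than a single one (for $\toothlen=1$ the comb is just a chain and the count is trivially $1$).

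I expect the main obstacle to be the combinatorial core of the second step: pinning down exactly which shufflings of the chain into $\rho$ avoid $312$, and verifying that the extension obtained has $\toothlen+k$ split points, with $k$ the number of split points of $\rho$ at or beyond the insertion point $p$. The bookkeeping of how the split points of the new extension line up with those of $\rho$ --- in particular accounting for the new block created by the inserted entry --- is the delicate part. Once that recursion is secured, the functional equation and the kernel-method extraction are routine.
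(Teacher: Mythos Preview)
Your argument is correct, and it is genuinely different from the paper's. The paper proves Theorem~\ref{thm:main} by a generating-tree isomorphism: it grows $312$-avoiding linear extensions of $\unevencombB{\toothlen}{\numelts}$ one element at a time, records for each object the number of legal insertion slots for $\numelts+1$, and shows that the resulting labelled tree coincides level by level with the generating tree for lattice paths from $(0,0)$ staying weakly below $y=x/\toothlen$ (equivalently, $(\toothlen{+}1)$-ary trees). No generating functions or auxiliary statistics are used; the match is purely structural.

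You instead recurse tooth by tooth. Removing the minimum $1$ splits the poset into the chain $2<\cdots<\toothlen$ and a shifted copy of $\combB{\numteeth-1}{\toothlen}$; your observation that the only freedom in a $312$-avoiding shuffle is the placement of $\toothlen$ at a split point of the smaller extension $\rho$, together with the bookkeeping that the resulting extension has $\toothlen+k$ split points, yields the refined recursion $h(\numteeth,m)=\sum_{m'\ge m-\toothlen}h(\numteeth-1,m')$. The kernel substitution $x=X(z)$ with $X=1+zX^{\toothlen+1}$ then forces $F=X$, giving the Fuss--Catalan functional equation directly. What each approach buys: the paper's generating-tree argument is essentially bijective and explains \emph{why} the same objects appear, at the cost of tracking the one-step insertion rule across the $\toothlen$ residue classes of $\numelts$; your approach is more algebraic and jumps a whole tooth at a time, which makes the recursion cleaner and delivers the functional equation $F=1+zF^{\toothlen+1}$ without passing through lattice paths, at the cost of introducing the split-point statistic and the kernel method. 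The combinatorial core you flag---that inserting $\toothlen$ at split point $p$ of $\rho$ yields an extension whose split points are exactly $\{0,1,\ldots,\toothlen-1\}\cup\{\toothlen+p':p'\text{ a split point of }\rho,\ p'\ge p\}$---is the one place that needs a careful write-up, but it checks out.
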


\proventable{
The following table shows in bold the observed number {\numavexts{312}{\combB{\numteeth}{\toothlen}}} for various values of $\toothlen > 1$ and {\numteeth};
the other values have been filled in based on Theorem \ref{thm:main}.
\begin{figure}[H]
\begin{center}
\begin{tabular}{ c | c | c | c }
  \hline
  {\numteeth} 
  &  \numavexts{312}{\combB{\numteeth}{2}}
  &  \numavexts{312}{\combB{\numteeth}{3}} 
  &  \numavexts{312}{\combB{\numteeth}{4}} \\ \hline
1 & \observedval{1} & \observedval{1} & \observedval{1} \\  
2 & \observedval{3} & \observedval{4} & \observedval{5} \\ 
3 & \observedval{12} & \observedval{22} & \extrapval{35} \\ 
4 & \observedval{55} & \observedval{140} & \extrapval{285}  \\ 
5 & \observedval{273} & \extrapval{969} & \extrapval{2530} \\ 
6 & \observedval{1428} & \extrapval{7084} & \extrapval{23751} \\ 
  \end{tabular}
\end{center}
\caption{\numavexts{312}{\combB{\numteeth}{\toothlen}} for various values of {\toothlen} and {\numteeth}}
\end{figure}

Notice that the observed sequence for {\numavexts{312}{\combB{\numteeth}{2}}} matches $\frac{1}{2\numteeth+1}{{3\numteeth}\choose{\numteeth}}$, the observed sequence for {\numavexts{312}{\combB{\numteeth}{3}}} matches $\frac{1}{3\numteeth+1}{{4\numteeth}\choose{\numteeth}}$, and the observed sequence for {\numavexts{312}{\combB{\numteeth}{4}}} matches $\frac{1}{4\numteeth+1}{{5\numteeth}\choose{\numteeth}}$.
}

\subsection{Proof of Theorem \ref{thm:main}}
\label{sec:mainproof}

\begin{proof}

In order to prove Theorem \ref{thm:main}, we will use \emph{generating tree analysis}, 
which is the process of figuring out how many objects of size $\numelts+1$ can be derived from each object of size {\numelts}.

A \emph{generating tree} is a tree each node of which represents an instance of an object. 
The objects represented by the {\numelts}th level of the tree are of size {\numelts}. 
The children of node {\parentnode}, where {\parentnode} is on the {\numelts}th level of the tree, represent the objects of size ${\numelts} + 1$ that can be obtained by a specified procedure from the object of size {\numelts} represented by node {\parentnode}. Let $\children(\parentnode)$ represent the children of node {\parentnode}.

We will begin by applying generating tree analysis to linear extensions of {\combB{\numteeth}{\toothlen}}. We will then apply it to $312$-avoiding permutations,
and finally, we will combine the two and compare the resulting generating tree to the generating tree for $(\toothlen+1)$-ary trees.

\subsubsection{Generating linear extensions of type-{\typeB} combs.}
\label{sec:extgentree}
Say we have a linear extension of {\unevencombB{\toothlen}{\numelts}} and we want to insert the element $\numelts +1$ into the linear extension in such a way that the resulting permutation will be a linear extension of {\unevencombB{\toothlen}{\numelts + 1}}. 
The constraints for doing this are as follows:
\begin{itemize}
\item If {\numelts} is divisible by {\toothlen}, then $\numelts + 1$ must be inserted after the greatest element of the form ${1 \bmod{\toothlen}}$ already in the permutation, since the elements of the form ${1 \bmod{\toothlen}}$ form the spine of {\unevencombB{\toothlen}{\numelts}} and the spinal minimal relations require that they appear in order.
\item If {\numelts} is not divisible by {\toothlen}, then $\numelts + 1$ must be inserted after {\numelts}, since unless $\numelts+1$ is located on the spine of the comb, the tooth minimal relations require that $\numelts + 1$ appears after {\numelts}.
\end{itemize}

\subsubsection{Generating $312$-avoiding permutations.} 
\label{sec:avgentree}
Now, say we have a $312$-avoiding permutation $\perm \in S_{\numelts}$, and we want to insert the element $\numelts + 1$ in such a way that the resulting permutation $\perm' \in S_{\numelts + 1}$ will still be $312$-avoiding. 
$\numelts+1$ may be inserted anywhere as long as it is not followed by an increasing subsequence (elements $\someelt_1$ and $\someelt_2$ such that $\someelt_1$ appears before $\someelt_2$ and $\someelt_1 < \someelt_2$). 

\subsubsection{Generating $312$-avoiding linear extensions of type-{\typeB} combs.}
Now, let's combine the two scenarios above; say we have a $312$-avoiding linear extension of {\unevencombB{\toothlen}{\numelts}} and we want to insert the element $\numelts + 1$ into the linear extension in such a way that the resulting permutation will be a $312$-avoiding linear extension of {\unevencombB{\toothlen}{\numelts+1}}.
The two sets of restrictions for next-element-insertion described in Sections \ref{sec:extgentree} and \ref{sec:avgentree} both prohibit insertion into some prefix of the existing permutation. 
By taking the union of these prefixes, we get the prefix into which $\numelts + 1$ cannot be inserted in order to obtain a $312$-avoiding linear extension of {\unevencombB{\toothlen}{\numelts+1}}.
The following describes the rules by which $\numelts + 1$ can be inserted:
\begin{itemize}
\item If {\numelts} is divisible by {\toothlen}, then we can insert $\numelts + 1$ anywhere after both of the following:
\begin{itemize}
\item the first element of the last increasing subsequence (elements $\someelt_1$ and $\someelt_2$ such that $\someelt_1$ appears before $\someelt_2$ and $\someelt_1 < \someelt_2$), and
\item the last element of the form $\someint \toothlen + 1$ (for integer {\someint}).
\end{itemize}
\item If {\numelts} is not divisible by {\toothlen}, then we can insert $\numelts + 1$ anywhere after both of the following:
\begin{itemize}
\item the first element of the last increasing subsequence, and 
\item the element {\numelts}.
\end{itemize}
\end{itemize}

\subsubsection{Generating tree for $312$-avoiding linear extensions of {\unevencombB{\toothlen}{\numelts}}.}
\label{sec:312avcombgentree}

Let's fix a positive integer {\toothlen} and consider a generating tree for $312$-avoiding linear extensions of {\unevencombB{\toothlen}{\numelts}}. 
Let each node of the generating tree represent a $312$-avoiding linear extension of {\unevencombB{\toothlen}{\numelts}}, and define the children of a node representing $\perm \in S_\numelts$ to be all $312$-avoiding linear extensions of {\unevencombB{\toothlen}{\numelts + 1}} that can be obtained by inserting the element $\numelts+1$ into {\perm}. The root node will represent the empty permutation and will have a single child, a node representing the permutation $[1] $. For $\numelts \leq \toothlen$, the $\numelts^{\text{th}}$ level will always contain exactly one node; however, at the $\toothlen^{\text{th}}$ level, the tree starts to branch out. Figure \ref{fig:312avtree} shows the first few levels of the generating tree for $312$-avoiding linear extensions of {\unevencombB{2}{\numelts}}.

\begin{figure}[H]
\begin{displaymath}
    \xymatrix@!=2ex{
       & & & & \emptyset \ar[d] & & & & & & &\\
       & & & & 1 \ar[d] & & & & & & &\\
       & & & & 12 \ar[drrr] \ar[dlll] & & & & & & & \\
       & 123 \ar[d] & & & & & & 132 \ar[drr] \ar[dll] & & & &\\
       & 1234 \ar[dr] \ar[dl] & & & & 1324 \ar[dr] \ar[dl] & & & & 1342 \ar[dr] \ar[d] \ar[dl]& &\\
       12345 & & 12354 & & 13245 & & 13254 & & 13425 & 13452 & 13542 &
       }
\end{displaymath}
\caption{First few levels of the generating tree for $312$-avoiding linear extensions {\unevencombB{2}{\numelts}}}
\label{fig:312avtree}
\end{figure}

Consider a $312$-avoiding linear extension {\perm} of {\unevencombB{\toothlen}{\numelts}}. The element $\numelts+1$ can be inserted anywhere before, after or among the last $|\children(\perm)|-1$ elements of {\perm}. 
Say that we derive $\perm'$ from {\perm} by inserting $\numelts+1$ in such a way that it precedes $\someint$ other elements, where $\someint \in \{0, \ldots, |\children(\perm)| -1 \}$. Note the following:
\[
|\children(\perm')| =  \left\{ \begin{array}{rl}
  \someint + 2 &\mbox{ if {\toothlen} divides $\numelts+1$, making $\numelts+2$ a spinal element;} \\
  \someint + 1&\mbox{ if {\toothlen} does not divide $\numelts+1$.}
       \end{array} \right.
\]

This holds because if $n+2$ is a spinal element, it can be inserted before or after $n+1$, whereas if $n+2$ is not a spinal element, the minimal relations of {\unevencombB{\toothlen}{\numelts + 2}} dictate that it appears after $n+1$.

By representing a generating tree node at the {\numelts}th level by its number of children {\numchildren}, we can write the propagation rules of our generating tree as follows:

\[
 (\numchildren)
 \rightsquigarrow
 \left\{ \begin{array}{rl}
 (2) \ldots (\numchildren+1) &\mbox{ if {\toothlen} divides $\numelts+1$, making $\numelts+2$ a spinal element;} \\
 (1) \ldots (\numchildren) &\mbox{ if {\toothlen} does not divide $\numelts+1$,}
       \end{array} \right.
\]
where $(\numchildren)$ represents a tree node with {\numchildren} children.

\subsubsection{Generating tree for $(\toothlen+1)$-ary trees.}

Now, let's show that the number of nodes at the ($\toothlen \numteeth$)th level of the generating tree for $312$-avoiding linear extensions of {\unevencombB{\toothlen}{\numelts}} is equal to the number of $({\toothlen}+1)$-ary trees on {\numteeth} nodes. It is known (by extension of the work of Pfaff and Fuss in 1791 \cite{Fuss}) that the number of $(\toothlen+1)$-ary trees on {\numteeth} nodes is equal to the number of lattice paths from $(0,0)$ to $(\toothlen \numteeth+1, \numteeth)$ composed of steps $(0,1)$ and $(1,0)$ that do not cross (but may touch) the line $y= x/{\toothlen}$, and start with the step $(1, 0)$.
We will show the equivalence between the generating tree for these lattice paths and the generating tree described in Section \ref{sec:312avcombgentree} above. 

Let's begin by sketching out a generating tree for the lattice paths.
Let a child of a node representing a path leading to $(x,y)$ be any node representing that same path followed by zero or more (legal) steps up and one step across (in that order). It is clear that any legal path can be described by a node in such a tree.

If node {\parentnode} represents a path leading to $(x,y)$ such that $0 \leq x \leq \toothlen y$, the number of points that can be reached by taking zero or more steps up without crossing the $y = x/{\toothlen}$ line is $\lfloor x/{\toothlen} \rfloor - y$, so {\parentnode} will have $\lfloor x/{\toothlen} \rfloor - y$ children.

\begin{figure}[H]
\begin{displaymath}
    \xymatrix@!=2ex{
       & & & & (0,0) \ar[d] & & & & & & &\\
       & & & & (1,0) \ar[drrr] \ar[dlll] & & & & & & & \\
       & (2,1) \ar[d] & & & & & & (2,0) \ar[drr] \ar[dll] & & & &\\
       & (3,1) \ar[dr] \ar[dl] & & & & (3,1) \ar[dr] \ar[dl] & & & & (3,0) \ar[dr] \ar[d] \ar[dl]& &\\
       (4,2) & & (4,1) & & (4,2) & & (4,1) & & (4,2) & (4,1) & (4,0) &
       }
\end{displaymath}
\caption{First few levels of the generating tree for lattice paths not crossing the $y = x/2$ line}
\end{figure}

\begin{figure}[H]
\begin{center}
\begin{tikzpicture}[x=1cm,y=1cm]
  \def\xmin{0}
  \def\xmax{5}
  \def\ymin{0}
  \def\ymax{5}

  \draw[style=help lines, ystep=1, xstep=1] (\xmin,\ymin) grid
  (\xmax,\ymax);

  \draw[->] (\xmin,\ymin) -- (\xmax,\ymin) node[right] {$x$};
  \draw[->] (\xmin,\ymin) -- (\xmin,\ymax) node[above] {$y$};

  \foreach \x in {0,1,...,5}
    \node at (\x, \ymin) [below] {\x};
  \foreach \y in {0,1,...,5}
    \node at (\xmin,\y) [left] {\y};
   
   
   \draw[very thick] (0,0) -> (1,0);
   \draw[very thick] (1,0) -> (2,0);
   \draw[very thick] (2,0) -> (2,1);
   \draw[very thick] (2,1) -> (3,1);
   \draw[very thick] (3,1) -> (4,1);
   
   \draw[dashed] (\xmin,\ymin) -- (\xmax,\ymax/2);
   
    \begin{customlegend}[legend entries={the y = x/2 line, the lattice path}, legend style={at={(4,4)},anchor=center}]
    \addlegendimage{dashed, sharp plot}
    \addlegendimage{very thick, sharp plot}
    \end{customlegend}

\end{tikzpicture}
\end{center}
\caption{Lattice path corresponding to $(0,0) \rightarrow (1,0) \rightarrow (2,1) \rightarrow (3,1) \rightarrow (4,1)$}
\end{figure}

Let $|\children(x,y)|$ denote the number of children of a node representing a path leading to $(x,y)$. Note that though there may be multiple tree nodes representing paths leading to $(x, y)$, they will all have the same number of children.

\[
|\children(x+1,y)| = \left\{ \begin{array}{rl}
|\children(x, y)| + 1&\mbox{ if {\toothlen} divides $x+1$;} \\
|\children(x, y)| &\mbox{ if {\toothlen} does not divide $x+1$.}
       \end{array} \right.
\]

This is true because when $x+1$ is a multiple of {\toothlen}, $y= (x+1)/{\toothlen}$ is an integer, allowing one additional step up that was not possible at $x$.

By representing a generating tree node at the $x$th level by its number of children {\numchildren}, we can write the propagation rules of our generating tree as follows:

\[
 (\numchildren)
 \rightsquigarrow
 \left\{ \begin{array}{rl}
 (2) \ldots (\numchildren+1) &\mbox{ if {\toothlen} divides $x+1$;} \\
 (1) \ldots (\numchildren) &\mbox{ if {\toothlen} does not divide $x+1$,}
       \end{array} \right.
\]
where $(\numchildren)$ represents a tree node with {\numchildren} children.

Recall that the generating tree described in Section \ref{sec:312avcombgentree} above propagates following the exact same pattern. The root of each tree has one child, so  the two trees must be identical. This concludes the proof.

\end{proof}

\section{Avoidance of length $3$ patterns in type-{\typeA} comb linear extensions}
\label{sec:avalphaone}

This section goes through the enumerations of type-{\typeA} comb linear extensions avoiding each of the length-3 patterns $\pattern \in S_3$. 
For some patterns (such as $123$ and $132$), enumeration is trivial; for others, it is more interesting; and for yet others (such as $231$, $312$ for $\toothlen > 2$ and $321$), enumeration remains an open problem.
The tables in this section depict the observed values of {\numavexts{\pattern}{\combA{\numteeth}{\toothlen}}} for each pattern $\pattern \in S_3$. 
Tables are omitted for patterns {\pattern} for which enumeration was successful.

\subsection{Type-{\typeA} $123$-avoidance}

\begin{thm}
\label{thm:numavexts123combA}
${\numavexts{123}{\combA{\numteeth}{\toothlen}}} = 0$ for $\toothlen > 1$ and $\numteeth > 1$. 
\end{thm}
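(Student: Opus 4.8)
The plan is to show that any linear extension of $\combA{\numteeth}{\toothlen}$ with $\toothlen > 1$ and $\numteeth > 1$ necessarily contains an increasing subsequence of length three, which is precisely the pattern $123$. The key structural observation is that in a type-$\typeA$ comb, the spinal elements are exactly $1, 2, \ldots, \numteeth$, and each of these is the bottom of a tooth of length $\toothlen$. So I would fix attention on tooth $1$ (the elements $1, 1+\numteeth, 1+2\numteeth, \ldots$) and tooth $2$ (the elements $2, 2+\numteeth, \ldots$).

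The main step is to exhibit an unavoidable increasing triple. Since $\toothlen > 1$, tooth $1$ contains at least the two elements $1$ and $1+\numteeth$, and the covering relation $1 \leq_{\plaincomb} 1+\numteeth$ forces $1$ to appear before $1+\numteeth$ in every linear extension. Since $\numteeth > 1$, the element $2$ also exists, and the spinal relation $1 \leq_{\plaincomb} 2$ forces $1$ to precede $2$. Now I would split into two cases according to where $2$ falls relative to $1+\numteeth$ in the extension. If $2$ appears before $1+\numteeth$, then since $1 < 2 < 1+\numteeth$ (using $\numteeth > 1$ so that $1 + \numteeth \geq 3 > 2$) and $1$ precedes $2$ precedes $1+\numteeth$, we have a $123$ pattern. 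If instead $1+\numteeth$ appears before $2$, I would use tooth $2$: it contains $2$ and $2+\numteeth$ with $2 \leq_{\plaincomb} 2+\numteeth$, and since $1+\numteeth < 2+\numteeth$ and $1 + \numteeth$ precedes $2$ precedes $2+\numteeth$, the triple $(1+\numteeth,\, 2,\, 2+\numteeth)$ — wait, this is not increasing since $1+\numteeth > 2$. So in that case I would instead observe that $1$ precedes $1+\numteeth$, and $1+\numteeth < 2+\numteeth$, and it remains to place $2+\numteeth$: it always comes after $1+\numteeth$? No. Cleaner: just compare $1+\numteeth$ and $2$; in the second case $1 + \numteeth$ precedes $2$, which precedes $2 + \numteeth$ (tooth-$2$ relation), and $1+\numteeth, 2$ are out of order, so instead I take $1, 1+\numteeth$, then ask about $2+\numteeth$ versus $1+\numteeth$: these are incomparable in $\plaincomb$, but $1 < 1+\numteeth < 2+\numteeth$, so if $2+\numteeth$ appears after $1+\numteeth$ we are done, and if $2+\numteeth$ appears before $1+\numteeth$ then $2$ (which precedes $2+\numteeth$) also precedes $1+\numteeth$, contradicting the case assumption. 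Either way a $123$ pattern appears.

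I expect the main obstacle to be purely organizational: making sure the case analysis is exhaustive and that the chosen triples are genuinely increasing in value (this is where the hypotheses $\toothlen > 1$ and $\numteeth > 1$ get used — $\toothlen > 1$ guarantees $1+\numteeth$ exists and $\numteeth > 1$ guarantees $1 + \numteeth > 2$ and that tooth $2$ exists). Once the unavoidable triple is pinned down, the conclusion $\numavexts{123}{\combA{\numteeth}{\toothlen}} = 0$ is immediate, since no linear extension is $123$-avoiding.
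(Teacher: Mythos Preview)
Your argument is correct and aligns with the paper's one-line justification (``the minimal relations of {\combA{\numteeth}{\toothlen}} always force a $[1,2,3]$ pattern''): you are simply spelling out which relations do the forcing. That said, the case split on the relative position of $2$ and $1+\numteeth$ is more work than necessary. Since $\numteeth > 1$ and $\toothlen > 1$, the comb already contains the chain $1 \leq_{\plaincomb} 2 \leq_{\plaincomb} 2+\numteeth$ (the first two spinal elements followed by the second element of tooth~$2$), so in every linear extension $1$ precedes $2$ precedes $2+\numteeth$, and $1 < 2 < 2+\numteeth$ is a $123$ pattern outright --- no case analysis needed. Your route reaches the same conclusion but detours through an incomparable pair ($1+\numteeth$ and $2$) when a single chain in the poset suffices.
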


This is apparent, because the minimal relations of {\combA{\numteeth}{\toothlen}} always force a $[1, 2, 3]$ pattern. 

\subsection{Type-{\typeA} $132$-avoidance}

\begin{thm}
\label{thm:numavexts132combA}
${\numavexts{132}{\combA{\numteeth}{\toothlen}}} = 1$
\end{thm}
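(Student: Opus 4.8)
The plan is to show that a type-\typeA{} comb has exactly one $132$-avoiding linear extension, and that this extension is the identity permutation $[1,2,\ldots,\toothlen\numteeth]$. Since the identity is always a linear extension of \combA{\numteeth}{\toothlen} (as noted in Section \ref{sec:abcombs}) and obviously avoids $132$, the count is at least $1$; the work is in proving it is at most $1$. First I would recall the structure of \combA{\numteeth}{\toothlen}: the spine carries the integers $1,2,\ldots,\numteeth$ in order, and tooth $\someint$ (for $1 \leq \someint \leq \numteeth$) carries $\someint, \someint+\numteeth, \someint+2\numteeth, \ldots, \someint+(\toothlen-1)\numteeth$, again in increasing order. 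So the partial order is exactly ``$a \leq_{\plaincomb} b$ whenever $a \leq b$ and $a \equiv b \pmod{\numteeth}$, plus the spine chain $1 \leq 2 \leq \cdots \leq \numteeth$.''

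Next I would argue by induction on the position, showing that in any $132$-avoiding linear extension \perm{} of \combA{\numteeth}{\toothlen}, the value in position $k$ must be $k$. Suppose positions $1,\ldots,k-1$ hold $1,\ldots,k-1$ in order, and let $j$ be the value placed in position $k$. If $j > k$, then the value $k$ appears somewhere later, say in position $\ell > k$. Now I want to exhibit a $132$ pattern. The idea: because $j > k \geq 2$, consider the element $1$ (in position $1$, value $1$), then $j$ (in position $k$), then $k$ (in position $\ell$) — this gives values $1, j, k$ with $1 < k < j$, i.e.\ the relative order $1, 3, 2$, which is a $132$ pattern, a contradiction. (I need $k \geq 2$ here; the base case $k=1$ needs its own one-line argument, namely that the minimum element $1$ of the poset — it has no relations forcing anything before it and is $\leq_\plaincomb$-below the start of its tooth and below the whole spine chain's... actually $1$ is a minimal element, and I must check it is forced first: any linear extension starting with $v_1 > 1$ puts $1$ later, then $v_1, 1$ read left to right with a later larger element would need care — cleaner is: the value $1$ must come first because if some $a > 1$ precedes it, then... hmm, this needs the third element. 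For $k=1$ I would instead note that whatever starts the permutation, the argument for $k=2$ onward forces positions $2,3,\ldots$ to be $2,3,\ldots$, and since it is a permutation of $[\toothlen\numteeth]$, position $1$ must be $1$.)

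The only subtle point is verifying that the triple of positions I select is legitimate, i.e.\ that the values $1$, $j$, $k$ genuinely occur in positions $1 < k < \ell$ with $1 < k < j$ — this is immediate from the setup — and that no poset relation prevents $k$ from appearing after $j$ in position $\ell$; but I do not need that, since I am deriving a contradiction from a hypothetical extension, not constructing one. The main obstacle is really just organizing the base case cleanly and making sure the induction correctly concludes ``\perm{} is the identity, hence unique.'' Once that is done, \numavexts{132}{\combA{\numteeth}{\toothlen}} = 1 follows since the identity is a $132$-avoiding linear extension and it is the only one.
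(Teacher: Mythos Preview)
Your argument is essentially correct and considerably more detailed than the paper's own treatment, which simply asserts that the result is ``apparent'' and that the only such extension is $[1,2,\ldots,\numteeth\toothlen]$. Your inductive step is clean: once position $1$ holds the value $1$, any later descent $j>k$ with $k$ appearing after $j$ produces the $132$-pattern $1,j,k$, so the permutation must be the identity. In fact this part of your argument does not use the comb structure at all --- it is the general fact that the only $132$-avoiding permutation beginning with $1$ is the identity.

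The one genuine wobble is your base case. You correctly start to say that $1$ is the minimum of the poset, then retreat to ``$1$ is a minimal element'' and propose instead to deduce position~$1$ from positions $2,3,\ldots$. That fallback is circular: your inductive step for $k\ge 2$ explicitly uses the value $1$ sitting in position $1$ to build the $132$-pattern, so you cannot run it without first knowing position~$1$ holds~$1$. The clean fix is exactly what you began to write: in \combA{\numteeth}{\toothlen} the element $1$ is not merely minimal but the unique minimum (it lies below every spine element, and every spine element lies below its entire tooth), so every linear extension must place $1$ first. With that one sentence in place, your proof is complete.
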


This is also apparent; the only such linear extension is the permutation $[1, 2, 3, \ldots, \numelts]$.

\subsection{Type-{\typeA} $213$-avoidance}
\label{sec:numavexts213combA}

\proventable{
\begin{figure}[H]
\begin{center}
\begin{tabular}{c | c | c | c}
  \hline
  {\numteeth} &  
{\numavexts{213}{\combA{\numteeth}{2}}}  &   
{\numavexts{213}{\combA{\numteeth}{3}}} &
{\numavexts{213}{\combA{\numteeth}{4}}} \\ \hline
2 & \observedval{2} & \observedval{2} & \observedval{2} \\ 
3 & \observedval{5} & \observedval{5} & \observedval{5}  \\ 
4 & \observedval{14} & \observedval{14} & \extrapval{14}  \\ 
5 & \observedval{42} & \extrapval{42} & \extrapval{42}  \\ 
6 & \observedval{132} & \extrapval{132} & \extrapval{132} \\ 
  \end{tabular}
\end{center}
\caption{{\numavexts{213}{\combA{\numteeth}{\toothlen}}} for various values of {\toothlen} and {\numteeth}}
\end{figure}

This sequence - $2,5,14,42,132, \ldots$ - is easily recognized as the Catalan sequence.
}

\begin{thm}
\label{thm:numavexts213combA}
${\numavexts{213}{\combA{\numteeth}{\toothlen}}} = C_{\numteeth}$ for $\toothlen > 1$, where $C_{\numteeth}$ is the {\numteeth}th Catalan number.
\end{thm}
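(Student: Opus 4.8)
\medskip

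The plan is to show that, for $\toothlen>1$, the $213$-avoiding linear extensions of $\combA{\numteeth}{\toothlen}$ are precisely the permutations of the form $[1,2,\ldots,\numteeth(\toothlen-1),\,\pi]$, where $\pi$ is an arbitrary $213$-avoiding arrangement of the top-of-tooth values $\{(\toothlen-1)\numteeth+1,\ldots,\toothlen\numteeth\}$. Since the top-of-tooth elements $\elt{1}{\toothlen},\ldots,\elt{\numteeth}{\toothlen}$ are pairwise incomparable in $\combA{\numteeth}{\toothlen}$ (no two share a tooth and, as $\toothlen>1$, none is a spine element), the rule ``send a linear extension to the order in which these $\numteeth$ elements appear'' is then a bijection onto the set of $213$-avoiding permutations of an $\numteeth$-element set, which has size $C_\numteeth$ by the theorem of MacMahon and Knuth.

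One direction is routine. For any $213$-avoiding $\pi$, the word $[1,2,\ldots,\numteeth(\toothlen-1),\pi]$ is a linear extension of $\combA{\numteeth}{\toothlen}$: each comb relation is either internal to the increasing prefix, or links $\elt{\spineeltidx}{\toothlen-1}$ (which lies in the prefix, since its value is at most $\numteeth(\toothlen-1)$) to $\elt{\spineeltidx}{\toothlen}$ (which lies in $\pi$). It avoids $213$ because the prefix is increasing and every value occurring in $\pi$ exceeds every value of the prefix, so any occurrence of $213$ would have to lie entirely inside $\pi$.

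The substance of the proof is the converse ``freezing'' claim: in every $213$-avoiding linear extension $\perm$ of $\combA{\numteeth}{\toothlen}$, the element $j$ occupies position $j$ for all $1\le j\le \numteeth(\toothlen-1)$. I would prove this by induction on $j$; the case $j=1$ holds because $1$ is the poset minimum. For the step, suppose $1,\ldots,j-1$ already occupy positions $1,\ldots,j-1$, and let $u$ be the suffix of $\perm$ from position $j$ onward, a $213$-avoiding permutation of $\{j,\ldots,\toothlen\numteeth\}$ with minimum $j$. Splitting $u$ around its minimum, $213$-avoidance forces the entries preceding $j$ in $u$ to be the largest available values, namely $\{\toothlen\numteeth-b+1,\ldots,\toothlen\numteeth\}$ for some $b\ge 0$; the goal is $b=0$. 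Write $j=\elt{\spineeltidx}{\tootheltidx}$; since $j\le \numteeth(\toothlen-1)$ we have $\tootheltidx\le \toothlen-1$, so the tooth-successor $\elt{\spineeltidx}{\tootheltidx+1}$ of $j$, of value $j+\numteeth\le\toothlen\numteeth$, exists. If $b>0$ and $j+\numteeth\ge\toothlen\numteeth-b+1$, then $\elt{\spineeltidx}{\tootheltidx+1}$ is one of the entries preceding $j$, which is impossible since it must follow $j=\elt{\spineeltidx}{\tootheltidx}$ in any linear extension. If instead $b>0$ and $j+\numteeth<\toothlen\numteeth-b+1$, let $m:=\toothlen\numteeth-b+1$ be the smallest entry preceding $j$; then $m>j+\numteeth>\numteeth$, so $m$ is not a spine element, and its tooth-predecessor has value $m-\numteeth$, which lies strictly between $j$ and $m$. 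That value therefore names an entry of $u$ occurring \emph{after} $j$, yet it must precede $m$, which precedes $j$ --- a contradiction. Hence $b=0$, so $j$ is the first entry of $u$, i.e.\ it sits in position $j$.

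The main obstacle is exactly this inductive step: one must check that both cases (whether or not the tooth-successor of $j$ falls into the ``top block'' of entries preceding $j$) close, while keeping track of existence --- the tooth-successor $\elt{\spineeltidx}{\tootheltidx+1}$ exists because $j\le\numteeth(\toothlen-1)$, and the minimum $m$ of the top block is a non-spine element (hence has a tooth-predecessor) because $m>j+\numteeth>\numteeth$. With the freezing claim in hand, $\perm$ must end with the $\numteeth$ top-of-tooth values in some order $\pi$; this $\pi$ avoids $213$ as a pattern of $\perm$, and by the routine direction every $213$-avoiding $\pi$ occurs, giving the bijection and hence $\numavexts{213}{\combA{\numteeth}{\toothlen}}=C_\numteeth$.
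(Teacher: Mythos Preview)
Your proposal is correct and follows the same overall strategy as the paper: show that in any $213$-avoiding linear extension the values $1,\ldots,\numteeth(\toothlen-1)$ must occupy positions $1,\ldots,\numteeth(\toothlen-1)$ in order, after which the remaining $\numteeth$ top-of-tooth elements may be arranged in any $213$-avoiding way, giving $C_\numteeth$.

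The difference lies in how the ``freezing'' of the prefix is established. The paper argues directly by contradiction: if some $\someint\le\numteeth(\toothlen-1)$ is preceded by a larger element $\someint'$, then using the top element of $\someint$'s tooth as the ``$3$'' (and, when $\someint'>\numteeth(\toothlen-1)$, an appropriate predecessor along $\someint'$'s tooth as the ``$2$'') one exhibits an explicit $213$ occurrence. Your argument instead inducts on the position $j$ and invokes the structural decomposition of $213$-avoiders around their minimum (entries left of the minimum are exactly a top block of values), then rules out a nonempty top block via two comb-relation contradictions. Both are valid; the paper's route is a little shorter and more hands-on, while yours makes the $213$-avoiding structure do more of the work and cleanly isolates the two obstructions (the tooth-successor of $j$ and the tooth-predecessor of the smallest top-block element).
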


\begin{proof}

First note that the first $\numteeth (\toothlen - 1)$ elements of any $213$-avoiding permutation extending {\combA{\numteeth}{\toothlen}} must appear consecutively and in order. 
We can show this by contradiction: assume that some element $\someint \in \{1, \ldots,  \numteeth (\toothlen - 1)\}$ is preceded by a greater element $\someint'$. 
(Note that $\someint'$ and {\someint} cannot be in the same tooth, because if they were, the minimal relations of {\combA{\numteeth}{\toothlen}} would force them to appear in order.)
We then have two cases: 
\begin{enumerate}
\item \label{item:case1}
$\someint' \leq \numteeth (\toothlen - 1)$, or 
\item \label{item:case2}
$\someint' > \numteeth (\toothlen - 1)$.
\end{enumerate}

If case \ref{item:case1} holds, then $[\someint', \someint, \someint'']$ forms a $[2, 1, 3]$ pattern, where $\someint''$ is the last element in the tooth to which {\someint} belongs.
({\someint} cannot be the last element of its tooth, since $\someint < \numteeth (\toothlen - 1)$, and only the last $\numteeth$ elements form tooth ends.)

If case \ref{item:case2} holds, then we have the constraint that $\someint' - (\toothlen-1)\numteeth \leq_{\plaincomb} \cdots \leq_{\plaincomb} \someint' - 2\numteeth \leq_{\plaincomb} \someint' - \numteeth \leq_{\plaincomb} \someint'$, so all of $\{\someint' - (\toothlen-1)\numteeth, \ldots, \someint' - 2\numteeth, \someint' - \numteeth, \someint'\}$ must appear before {\someint}.
Let $\someint''$ be the last element in the tooth to which $\someint$ belongs. At least one $\someotherint \in \{\someint' - (\toothlen-1)\numteeth, \ldots, \someint' - 2\numteeth, \someint' - \numteeth, \someint'\}$ is such that $\someint \leq \someotherint \leq \someint''$; thus, $[\someotherint, \someint, \someint'']$ forms a $[2, 1, 3]$ pattern.

We have more freedom with the ordering of the last {\numteeth} elements of the extending permutation. They can appear in any order, as long as they avoid $213$. 
We know that there are $C_{\numteeth}$  {\numteeth}-element $213$-avoiding permutations \cite{Knuth}, so we can conclude that ${\numavexts{213}{\combA{\numteeth}{\toothlen}}} = C_{\numteeth}$.
\end{proof}

\subsection{Type-{\typeA} $231$-avoidance}
\label{sec:numavexts231combA}

\begin{figure}[H]
\begin{center}
\begin{tabular}{c | c | c | c}
  \hline
  {\numteeth} &  
\numavexts{231}{\combA{\numteeth}{2}}  &   
\numavexts{231}{\combA{\numteeth}{3}} &
\numavexts{231}{\combA{\numteeth}{4}} \\ \hline
2 & \observedval{3} & \observedval{8} & \observedval{21} \\ 
3 & \observedval{11} & \observedval{91} & \suggestedval{731} \\ 
4 & \observedval{44} & \observedval{1210} &  \\ 
5 & \observedval{185} & \suggestedval{17606} &  \\ 
6 & \observedval{804} & & \\ 
7 & \suggestedval{3579} & & \\
  \end{tabular}
\end{center}
\caption{{\numavexts{231}{\combA{\numteeth}{\toothlen}}} for various values of {\toothlen} and {\numteeth}}
\end{figure}

The values in bold were kindly supplied by an anonymous reviewer.

Enumerating $231$-avoiding linear extensions of {\combA{\numteeth}{\toothlen}} remains an open problem.
According to the Online Encyclopedia of Integer Sequences \cite{oeis}, the observed values of {\numavexts{231}{\combA{\numteeth}{2}}} match the sequence given by the generating function $\frac{1}{1-x \cdot C(x) \cdot C(x \cdot C(x))} = \frac{2}{1 + \sqrt{2 \sqrt{1 - 4x} - 1}}$, where $C(x)$ is the generating function for the Catalan numbers.

{\numavexts{231}{\combA{\numteeth}{3}}} does not appear in the Online Encyclopedia of Integer Sequences.

\subsection{Type-{\typeA} $312$-avoidance}
\label{sec:numavexts312combA}

\begin{figure}[H]
\begin{center}
\begin{tabular}{c | c | c | c}
  \hline
  {\numteeth} &  
{\numavexts{312}{\combA{\numteeth}{2}}}  &   
{\numavexts{312}{\combA{\numteeth}{3}}} &
{\numavexts{312}{\combA{\numteeth}{4}}} \\ \hline
2 & \observedval{3} & \observedval{8} & \observedval{21} \\ 
3 & \observedval{9} & \observedval{73} & \suggestedval{585} \\ 
4 & \observedval{28} & \observedval{738} &  \\ 
5 & \observedval{90} & \suggestedval{8022} &  \\ 
6 & \observedval{297} & & \\ 

  \end{tabular}
\end{center}
\caption{{\numavexts{312}{\comb{\numteeth}{\toothlen}}} for various values of {\toothlen} and {\numteeth}}
\end{figure}

The values in bold were kindly supplied by an anonymous reviewer.

\begin{thm}
\label{thm:numavexts312combA}
${\numavexts{312}{\combA{\numteeth}{2}}} = C_{\numteeth + 1} - C_{\numteeth}$, where $C_{\numteeth}$ is the {\numteeth}th Catalan number.
\end{thm}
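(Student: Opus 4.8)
The plan is to strip off a forced prefix and reduce to a classical count. I claim that every $312$-avoiding linear extension $\perm$ of $\combA{\numteeth}{2}$ starts with $1, 2, \ldots, \numteeth-1$, and that the remaining $\numteeth+1$ entries form a $312$-avoiding arrangement of $\{\numteeth, \numteeth+1, \ldots, 2\numteeth\}$ whose only order constraint is $\numteeth \leq_{\plaincomb} 2\numteeth$. Since the number of $312$-avoiding permutations of an $(\numteeth+1)$-element set in which the minimum precedes the maximum turns out to be $C_{\numteeth+1}-C_{\numteeth}$, this yields the theorem.

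The tool is the standard structure of $312$-avoiding permutations: every entry to the left of the minimum is smaller than every entry to its right, since a value $v$ to the left of the minimum together with a value $w < v$ to its right forms the subsequence $[v, 1, w]$, a $312$. Hence the block of entries preceding the minimum consists of the smallest available values; in particular, if the second-smallest value is forced to lie after the minimum, that block is empty. I would apply this repeatedly. In $\combA{\numteeth}{2}$ we have $1 \leq_{\plaincomb} 2$ (for $\numteeth \geq 2$), so $1$ precedes $2$, whence $\perm_1 = 1$. Deleting $1$ leaves a $312$-avoiding linear extension of the induced poset on the contiguous set $\{2, \ldots, 2\numteeth\}$, whose minimum $2$ precedes $3$ whenever $3 \leq \numteeth$; this forces $\perm_2 = 2$, and inductively $\perm_i = i$ for every $i \leq \numteeth-1$. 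The induction step is valid exactly because the spine relation $i \leq_{\plaincomb} i+1$ is present iff $i \leq \numteeth-1$, and because the tooth elements $\numteeth+1, \ldots, \numteeth+i-1$ released by the earlier deletions are all larger than $i+1$, so they can never fall into the forced-empty block before the minimum.

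Once $1, \ldots, \numteeth-1$ are removed, the residual poset sits on $\{\numteeth, \numteeth+1, \ldots, 2\numteeth\}$ and keeps only the relation $\numteeth \leq_{\plaincomb} 2\numteeth$ (the tooth of the last spine element); every other tooth has lost its spinal endpoint. After relabelling this set as $[\numteeth+1]$, a $312$-avoiding linear extension of $\combA{\numteeth}{2}$ is exactly a $312$-avoiding permutation of $[\numteeth+1]$ in which $1$ precedes $\numteeth+1$. To count these I would apply the structural fact once more: $\numteeth+1$ precedes $1$ iff $1$ is the last entry, and deleting a trailing $1$ (which can never be part of a $312$) gives a bijection with the $312$-avoiding permutations of $[\numteeth]$; so there are $C_{\numteeth}$ bad permutations out of the $C_{\numteeth+1}$ total, leaving $C_{\numteeth+1}-C_{\numteeth}$.

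I expect the iterated forcing in the second paragraph to be the crux: one must carefully describe how the induced poset changes at each deletion, apply the structural lemma to the correctly relabelled sub-permutation each time, and confirm that no freed tooth element ever reopens the part of the permutation lying before the minimum during the first $\numteeth-1$ steps (the argument genuinely stopping at step $\numteeth$, where the $\numteeth+1$ remaining entries gain their freedom). The final reduction and the count $C_{\numteeth+1}-C_{\numteeth}$ are routine.
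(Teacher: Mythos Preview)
Your proof is correct and follows the same overall decomposition as the paper: first argue that $1,2,\ldots,\numteeth-1$ is a forced prefix, then count $312$-avoiding arrangements of $\{\numteeth,\ldots,2\numteeth\}$ subject only to $\numteeth$ preceding $2\numteeth$.

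The two proofs differ in the details at both stages. For the forced prefix, the paper gives a one-line argument: the spine forces $1,\ldots,\numteeth$ to appear in order, and any $\someint>\numteeth$ occurring before $\numteeth-1$ would make $[\someint,\numteeth-1,\numteeth]$ a $312$; hence nothing outside $\{1,\ldots,\numteeth-1\}$ can precede $\numteeth-1$. Your iterated ``left block of the minimum is empty'' argument reaches the same conclusion but with more machinery than is needed here. For the final count, the paper conditions on how many of $\numteeth+1,\ldots,2\numteeth$ lie to the left of $\numteeth$, obtaining $\sum_{i=0}^{\numteeth-1} C_i C_{\numteeth-i}$ and then invoking the Catalan convolution $\sum_{i=0}^{\numteeth} C_i C_{\numteeth-i}=C_{\numteeth+1}$. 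Your complementary count (the bad permutations are exactly those ending in the minimum, and stripping that minimum is a bijection to $312$-avoiders of length $\numteeth$) is a cleaner route to the same $C_{\numteeth+1}-C_{\numteeth}$ and avoids the recurrence entirely. One small point you leave implicit is the converse direction of the bijection: that prepending $1,\ldots,\numteeth-1$ to any $312$-avoiding word on $\{\numteeth,\ldots,2\numteeth\}$ with $\numteeth$ before $2\numteeth$ really does yield a $312$-avoiding linear extension; this is routine (any $312$ would have to live entirely in the suffix) but worth a sentence.
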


\begin{proof}
We know that any linear extension of a type-{\typeA} comb has to contain the elements $\{1, 2, \ldots, \numteeth\}$ in order. Now, notice that $\{1, 2, \ldots, \numteeth - 1\}$ must also appear consecutively, since if $\someint > \numteeth$ appears before $\numteeth - 1$, then $[\someint, \numteeth - 1, \numteeth]$ will form a $[3,1,2]$ pattern.
We see that the rest of the linear extension consists of $\{\numteeth, \numteeth + 1, \ldots, 2\numteeth \}$ in any $312$-avoiding order which has {\numteeth} preceding $2 \numteeth$.
Note that any element preceding {\numteeth} must be smaller than any element following {\numteeth}, since otherwise a $[3,1,2]$ pattern will occur, with {\numteeth} as the $1$. 
Suppose that $\{\numteeth + 1, \numteeth + 2, \ldots, \numteeth + i\}$ precede {\numteeth}, and $\{\numteeth + i + 1, \numteeth + i + 2, \ldots, 2\numteeth\}$ follow {\numteeth}.
$\{\numteeth + 1, \numteeth + 2, \ldots, \numteeth + i\}$ can be arranged in any $312$-avoiding permutation, and so can $\{\numteeth + i + 1, \numteeth + i + 2, \ldots, 2\numteeth\}$.\\
Recall that there are $C_{i}$ $312$-avoiding permutations of $\{1, \ldots, i\}$. So, there are $C_{i}C_{\numteeth-i}$ ways for $\{\numteeth + 1, \numteeth + 2, \ldots, \numteeth + i\}$ to precede {\numteeth} in a $312$-avoiding way and $\{\numteeth + i + 1, \numteeth + i + 2, \ldots, 2\numteeth\}$ to follow {\numteeth} in a $312$-avoiding way.
So, 
\[A_{312}(\combA{\numteeth}{2}) = \sum_{i = 0}^{\numteeth - 1} C_{i}C_{\numteeth - i}\]
\[ = \sum_{i  = 0}^{\numteeth}C_{i}C_{\numteeth - i} - C_{\numteeth}C_{0}\]
\[ = C_{\numteeth + 1} - C_{\numteeth}.\]
\end{proof}

Enumerating $312$-avoiding linear extensions of {\combA{\numteeth}{\toothlen}} for $\toothlen > 2$ remains an open problem.

\subsection{Type-{\typeA} $321$-avoidance}
\label{sec:numavexts321combA}

\begin{figure}[H]
\begin{center}
\begin{tabular}{c | c | c | c}
  \hline
  {\numteeth} &  
{\numavexts{321}{\combA{\numteeth}{2}}}  &   
{\numavexts{321}{\combA{\numteeth}{3}}} &
{\numavexts{321}{\combA{\numteeth}{4}}} \\ \hline
2 & \observedval{3} & \observedval{10} & \observedval{35} \\ 
3 & \observedval{13} & \observedval{161} &  \\ 
4 & \observedval{67} & \observedval{3196} &  \\ 
5 & \observedval{378} & &  \\ 
6 & \observedval{2244} & & \\ 
  \end{tabular}
\end{center}
\caption{{\numavexts{321}{\combA{\numteeth}{\toothlen}}} for various values of {\toothlen} and {\numteeth}}
\end{figure}

Enumerating $321$-avoiding linear extensions of {\combA{\numteeth}{\toothlen}} remains an open problem. 

\section{Avoidance of other length $3$ patterns in type-{\typeB} comb linear extensions}
\label{sec:avbetaone}

This section goes through the enumerations of type-{\typeB} comb linear extensions avoiding each of the length-3 patterns $\pattern \in S_3$. 
As with type-{\typeA} combs, enumerating {\pattern}-avoiding type-{\typeB} comb linear extensions is trivial for some patterns {\pattern} (such as $123$ and $132$) and more interesting for other patterns.
The tables in this section depict in bold the observed values of {\numavexts{\pattern}{\combA{\numteeth}{\toothlen}}} for each pattern $\pattern \in S_3$. For the patterns {\pattern} for which enumeration was successful, the remaining values are filled in.

\subsection{Type-{\typeB} $123$-avoidance}

\begin{thm}
\label{thm:numavexts123combB}
${\numavexts{123}{\combB{\numteeth}{\toothlen}}} = 0$ for $\toothlen > 1$ and $\numteeth > 1$. 
\end{thm}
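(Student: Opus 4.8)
The plan is to show that the partial order of $\combB{\numteeth}{\toothlen}$ already contains a chain of three elements whose integer labels are increasing; any such chain must appear in that same relative order in \emph{every} linear extension, hence forces a $[1,2,3]$ pattern, so no linear extension of $\combB{\numteeth}{\toothlen}$ can avoid $123$ and the count is $0$.

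Concretely, I would argue as follows. Since $\numteeth > 1$, the comb has at least two teeth, and since $\toothlen > 1$ the tooth hanging off the second spinal element has at least two elements. Take the three elements $1$, $\toothlen + 1$, and $\toothlen + 2$. The spinal relations of $\combB{\numteeth}{\toothlen}$ give $1 \leq_{\plaincomb} \toothlen + 1$ (the first two elements of the spine), and the tooth relations with $\someint = 1$ give $\toothlen + 1 \leq_{\plaincomb} \toothlen + 2$. Therefore, in any linear extension of $\combB{\numteeth}{\toothlen}$, these three elements occur in the order $1,\ \toothlen+1,\ \toothlen+2$, which is an increasing subsequence — i.e. a $[1,2,3]$ pattern. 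Since this holds for every linear extension, $\numavexts{123}{\combB{\numteeth}{\toothlen}} = 0$.

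There is essentially no obstacle here; the only point worth a sentence is checking that $\toothlen+2$ is genuinely an element of the comb, which is exactly guaranteed by the hypothesis $\toothlen > 1$ (the second tooth has length $\toothlen \ge 2$). As an aside one could note the even simpler observation that for $\toothlen \ge 3$ the first tooth alone contains the chain $1 \leq_{\plaincomb} 2 \leq_{\plaincomb} 3$; the argument above has the advantage of covering the remaining case $\toothlen = 2$ uniformly, using the second tooth.
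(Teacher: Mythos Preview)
Your proof is correct and follows the same idea as the paper: the paper simply remarks that the minimal relations of $\combB{\numteeth}{\toothlen}$ force a $[1,2,3]$ pattern, and you make this explicit by exhibiting the chain $1 \leq_{\plaincomb} \toothlen+1 \leq_{\plaincomb} \toothlen+2$. Your version is more detailed than the paper's one-line justification, but the approach is the same.
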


This is apparent, because the minimal relations of {\combB{\numteeth}{\toothlen}} always force a $[1, 2, 3]$ pattern. 

\subsection{Type-{\typeB} $132$-avoidance}

\begin{thm}
\label{thm:numavexts132combB}
${\numavexts{132}{\combB{\numteeth}{\toothlen}}} = 1$.
\end{thm}

This is also apparent; the only such linear extension is the permutation $[1, 2, 3, \ldots, \numelts]$.

\subsection{Type-{\typeB} $213$-avoidance}

\proventable{
\begin{figure}[H]
\begin{center}
\begin{tabular}{c | c | c | c}
  \hline
  {\numteeth} &  
{\numavexts{213}{\combB{\numteeth}{2}}}  &   
{\numavexts{213}{\combB{\numteeth}{3}}} &
{\numavexts{213}{\combB{\numteeth}{4}}} \\ \hline
2 & \observedval{2} & \observedval{3} & \observedval{4} \\ 
3 & \observedval{4} & \observedval{9} & \extrapval{16} \\ 
4 & \observedval{8} & \observedval{27} & \extrapval{64} \\ 
5 & \observedval{16} & \extrapval{81} & \extrapval{256} \\ 
6 & \observedval{32} & \extrapval{243} & \extrapval{1024} \\ 
  \end{tabular}
\end{center}
\caption{{\numavexts{213}{\combB{\numteeth}{\toothlen}}} for various values of {\toothlen} and {\numteeth}}
\end{figure}
}

\begin{thm}
\label{thm:numavexts213combB}
$\numavexts{213}{\combB{\numteeth}{\toothlen}} = \toothlen^{\numteeth - 1}$.
\end{thm}

\begin{proof}
We will show this using induction. 

Our base case, {\combB{1}{\toothlen}}, has only one linear extension; $[1, 2, \ldots, \toothlen]$. This linear extension avoids $213$, so we can conclude that $\numavexts{213}{\combB{1}{\toothlen}} = 1$.

Next, we will show that $\numavexts{213}{\combB{\numteeth + 1}{\toothlen}} = \toothlen \times \numavexts{213}{\combB{\numteeth}{\toothlen}}$. 
We will do this by demonstrating that every $213$-avoiding linear extension of {\combB{\numteeth}{\toothlen}} leads to {\toothlen} $213$-avoiding linear extensions of {\combB{\numteeth + 1}{\toothlen}}.
Let $E = [\someelt_1, \ldots, \someelt_{\toothlen \numteeth}]$ be a $213$-avoiding linear extension of {\combB{\numteeth}{\toothlen}}.
Let $E' = [\someelt_1', \ldots, \someelt_{\toothlen \numteeth}']$ be $[\someelt_1 + \toothlen, \ldots, \someelt_{\toothlen \numteeth} + \toothlen]$. Thus, $E'$ is a $213$-avoiding permutation of $[1 + \toothlen, 2 + \toothlen, \ldots, (\numteeth + 1) \toothlen]$. 
$E'$ can be turned into a $213$-avoiding linear extension of {\combB{\numteeth + 1}{\toothlen}} by appending $1$ before $\someelt_1'$ and inserting $2, \ldots, \toothlen$ after $1$ in any way such that their relative order is maintained and the resulting permutation is still $213$-avoiding. 
It turns out that $\otherelt \in \{2, \ldots, \toothlen\}$ can only be inserted in two places: directly between $1$ and $\someelt_1'$ or at the very end. 
Otherwise, $[\someelt_1', \otherelt, \someelt_{\toothlen  \numteeth}']$ would form a $[2, 1, 3]$ pattern (recall that $\someelt_1' = 1 + \toothlen$ and $\someelt_{\toothlen \numteeth}' > 1 + \toothlen$).
Since $\{2, \ldots, \toothlen\}$ belong to a tooth and thus must appear in order, there are exactly {\toothlen} ways of placing them in the two possible locations.
\end{proof}

\subsection{Type-{\typeB} $231$-avoidance}

\proventable{
\begin{figure}[H]
\begin{center}
\begin{tabular}{c | c | c | c}
  \hline
  {\numteeth} &  
{\numavexts{231}{\combB{\numteeth}{2}}}  &   
{\numavexts{231}{\combB{\numteeth}{3}}} &
{\numavexts{231}{\combB{\numteeth}{4}}} \\ \hline
2 & \observedval{2} & \observedval{3} & \observedval{4} \\ 
3 & \observedval{4} & \observedval{9} & \extrapval{16} \\ 
4 & \observedval{8} & \observedval{27} & \extrapval{64} \\ 
5 & \observedval{16} & \extrapval{81} & \extrapval{256} \\ 
6 & \observedval{32} & \extrapval{243} & \extrapval{1024} \\ 
  \end{tabular}
\end{center}
\caption{{\numavexts{231}{\combB{\numteeth}{\toothlen}}} for various values of {\toothlen} and {\numteeth}}
\end{figure}
}

\begin{thm}
\label{thm:numavexts231combB}
$\numavexts{231}{\combB{\numteeth}{\toothlen}} = \toothlen^{\numteeth - 1}$.
\end{thm}

Theorem \ref{thm:numavexts231combB} can be proven with a slight modification on the proof of Theorem \ref{thm:numavexts213combB}.
It can be shown that $\numavexts{231}{\combB{\numteeth + 1}{\toothlen}} = \toothlen \times \numavexts{231}{\combB{\numteeth}{\toothlen}}$ by showing that $[\someelt_{\toothlen \numteeth}, \numteeth \toothlen + 1, \ldots, (\numteeth + 1)\toothlen]$ must be the last $\toothlen + 1$ elements of any $231$-avoiding extension of \combB{\numteeth + 1}{\toothlen}, and that all $\toothlen + 1$ of these elements must appear in increasing order with the exception of one of $\toothlen$ pairs of adjacent elements, whose order may be reversed.

\subsection{Type-{\typeB} $312$-avoidance}

This result was discussed in Section \ref{sec:numavexts312combB}.

\subsection{Type-{\typeB} $321$-avoidance}

\begin{figure}[H]
\begin{center}
\begin{tabular}{c | c | c | c}
  \hline
  {\numteeth} &  
{\numavexts{321}{\combB{\numteeth}{2}}} &   
{\numavexts{321}{\combB{\numteeth}{3}}} &
{\numavexts{321}{\combB{\numteeth}{4}}} \\ \hline
2 & \observedval{3} & \observedval{10} & \observedval{35} \\ 
3 & \observedval{12} & \observedval{127} & \observedval{1222} \\ 
4 & \observedval{55} & \observedval{1866} &  \\ 
5 & \observedval{273} & &  \\ 
6 & \observedval{1428} & & \\ 
  \end{tabular}
\end{center}
\caption{{\numavexts{321}{\combB{\numteeth}{\toothlen}}} for various values of {\toothlen} and {\numteeth}}
\end{figure}

A slightly modified version of the argument made in Section {\ref{sec:numavexts312combB}} can prove that $\numavexts{321}{\combB{\numteeth}{2}} = \numavexts{312}{\combB{\numteeth}{2}}$. 
We simply consider insertions after the first element of the last falling subsequence as opposed to the last rising subsequence, and the rest of the proof follows.

Enumerating $321$-avoiding linear extensions of {\unevencombB{\numteeth}{\toothlen}} for $\toothlen > 2$ remains an open problem. 

\section{Avoidance of multiple length-3 patterns in type-{\typeA} comb linear extensions}
\label{sec:avalphatwo}

In this section, we will explore type-{\typeA} comb linear extensions which avoid two length-$3$ patterns at once.
Extending the notation introduced in previous sections, let $\numavexts{\pattern, \otherpattern}{\poset}$ denote the number of linear extensions of the poset {\poset} that avoid both {\pattern} and {\otherpattern}.

For some choices of patterns, enumerating such linear extensions is trivial. For instance, since for $\numteeth > 1$ there are no comb linear extensions of type either {\typeA} or {\typeB} avoiding $123$, it follows that there are also no comb linear extensions avoiding both $123$ and another pattern.
Recall also that there is always exactly one comb linear extension of type both {\typeA} and {\typeB} avoiding $132$, and that is $[1, 2, \ldots, \numteeth \toothlen]$. Since this linear extension also avoids all of $\{213, 231, 312, 321\}$, it follows that there is exactly one comb linear extension of type both {\typeA} and {\typeB} which avoids $132$ and one of $\{213, 231, 312, 321\}$.

\subsection{Type-{\typeA} $213$- and $231$-avoidance, and type-{\typeA} $213$- and $312$-avoidance}

\proventable{
\begin{figure}[H]
\begin{center}
\begin{tabular}{c | c | c | c}
  \hline
  {\numteeth} &  
{\numavexts{213, 231}{\combA{\numteeth}{2}}} & 
{\numavexts{213, 231}{\combA{\numteeth}{3}}} &
{\numavexts{213, 231}{\combA{\numteeth}{4}}} \\ 
 &
= {\numavexts{213, 312}{\combA{\numteeth}{2}}} &   
= {\numavexts{213, 312}{\combA{\numteeth}{3}}} &   
= {\numavexts{213, 312}{\combA{\numteeth}{4}}} \\ \hline
2 & \observedval{2} & \observedval{2} & \extrapval{2} \\ 
3 & \observedval{4} & \observedval{4} & \extrapval{4} \\ 
4 & \observedval{8} & \observedval{8} & \extrapval{8} \\ 
5 & \observedval{16} & \extrapval{16} & \extrapval{16} \\ 
6 & \observedval{32} & \extrapval{32} & \extrapval{32} \\ 
  \end{tabular}
\end{center}
\caption{{\numavexts{213, 231}{\combA{\numteeth}{\toothlen}}} = {\numavexts{213, 312}{\combA{\numteeth}{\toothlen}}} for various values of {\toothlen} and {\numteeth}}
\end{figure}
}

\begin{thm}
\label{thm:numavexts213and231combA}
$\numavexts{213, 231}{\combA{\numteeth}{\toothlen}} = \numavexts{213, 312}{\combA{\numteeth}{\toothlen}} =  2^{\numteeth - 1}.$
\end{thm}

\begin{proof}
Recall from the proof of Theorem \ref{thm:numavexts213combA} that in order for a linear extension of a type-$\typeA$ comb to avoid $213$, the first $(\toothlen - 1)\numteeth$ elements must appear consecutively in order. It is known that  that the number of ($213, 231$)-avoiding elements of $S_{\numteeth}$ (similarly, the number of ($213, 312$)-avoiding elements of $S_{\numteeth}$) is $2^{\numteeth-1}$  \cite{simionschmidt}, so we can conclude that there are $2^{\numteeth-1}$ ways to order the remaining elements.
\end{proof}

\subsection{Type-{\typeA} $213$- and $321$-avoidance}

\proventable{
\begin{figure}[H]
\begin{center}
\begin{tabular}{c | c | c | c}
  \hline
  {\numteeth} &  
{\numavexts{213, 321}{\combA{\numteeth}{2}}}  &   
{\numavexts{213, 321}{\combA{\numteeth}{3}}} &
{\numavexts{213, 321}{\combA{\numteeth}{4}}} \\ \hline
2 & \observedval{2} & \observedval{2} & \extrapval{2} \\ 
3 & \observedval{4} & \observedval{4} & \extrapval{4} \\ 
4 & \observedval{7} & \observedval{7} & \extrapval{7} \\ 
5 & \observedval{11} & \extrapval{11} & \extrapval{11} \\ 
6 & \observedval{16} & \extrapval{16} & \extrapval{16} \\ 
  \end{tabular}
\end{center}
\caption{{\numavexts{213, 321}{\combA{\numteeth}{\toothlen}}} for various values of {\toothlen} and {\numteeth}}
\end{figure}
}

\begin{thm}
\label{thm:numavexts213and321combA}
$\numavexts{213, 321}{\combA{\numteeth}{\toothlen}} = {\numteeth \choose 2} + 1.$
\end{thm}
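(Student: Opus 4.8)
The plan is to reduce, exactly as in the proof of Theorem~\ref{thm:numavexts213combA}, to counting the permutations of $S_{\numteeth}$ that avoid \emph{both} $213$ and $321$, and then to evaluate that count. I expect no genuine obstacle; the one point that needs care is showing that in a $213$-avoiding linear extension of $\combA{\numteeth}{\toothlen}$, a $321$-pattern cannot involve the prefix of small values that $213$-avoidance forces to occur at the front in increasing order.

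First I would recall the structural fact established in the proof of Theorem~\ref{thm:numavexts213combA}: for $\toothlen > 1$, every $213$-avoiding linear extension of $\combA{\numteeth}{\toothlen}$ has the form $[1, 2, \ldots, \numteeth(\toothlen-1)]$ --- the $\numteeth(\toothlen-1)$ smallest values, in increasing order, occupying the first $\numteeth(\toothlen-1)$ positions --- followed by some arrangement $\pi$ of the $\numteeth$ largest values $\{\numteeth(\toothlen-1)+1, \ldots, \numteeth\toothlen\}$; conversely, every such concatenation in which $\pi$ is a $213$-avoiding arrangement of those largest values is a $213$-avoiding linear extension.

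Next I would argue that imposing $321$-avoidance on top of this constrains only $\pi$. In a permutation of the above form, any strictly decreasing subsequence of length three must lie entirely within the last $\numteeth$ positions: the first $\numteeth(\toothlen-1)$ positions form a contiguous block whose entries are increasing \emph{and} are all smaller than every entry of the suffix, so none of them can serve as any of the three entries of a $321$-pattern. Hence a linear extension of $\combA{\numteeth}{\toothlen}$ avoids both $213$ and $321$ if and only if it has the above form with $\pi$ avoiding both $213$ and $321$; in particular $\numavexts{213, 321}{\combA{\numteeth}{\toothlen}}$ equals the number of permutations of $S_{\numteeth}$ that avoid both $213$ and $321$.

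It remains to show that the number of permutations of $S_n$ avoiding both $213$ and $321$ is ${n \choose 2} + 1$. This is one of the Simion--Schmidt two-pattern enumerations~\cite{simionschmidt} and may be cited directly; for self-containedness I would instead give the short proof obtained by conditioning on the position $p$ of the entry $1$. Avoiding $213$ forces every entry before the $1$ to exceed every entry after it; avoiding $321$ then forces the block of entries before the $1$ to be increasing, and, whenever that block is nonempty, forces the block of entries after the $1$ to be increasing as well. So for each $p \in \{2, \ldots, n\}$ there is exactly one such permutation, while the permutations with $p = 1$ are $1$ followed by a $(213, 321)$-avoiding permutation of $\{2, \ldots, n\}$; writing $f(n)$ for the count in $S_n$, this gives $f(n) = (n-1) + f(n-1)$ with $f(1) = 1$, hence $f(n) = 1 + {n \choose 2}$. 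Applying this with $n = \numteeth$ finishes the proof.
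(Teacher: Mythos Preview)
Your proposal is correct and follows essentially the same approach as the paper: reduce via Theorem~\ref{thm:numavexts213combA} to the suffix of the $\numteeth$ largest values, then count $(213,321)$-avoiding permutations of $S_{\numteeth}$. The paper simply cites Simion--Schmidt for the count $\binom{\numteeth}{2}+1$, whereas you supply a self-contained derivation and, more importantly, you make explicit the step the paper glosses over---that a $321$-pattern cannot straddle the increasing prefix and the suffix---so your write-up is actually more complete.
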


\begin{proof}
As above, recall from the proof of Theorem \ref{thm:numavexts213combA} that in order for a linear extension of a type-$\typeA$ comb to avoid $213$, the first $(\toothlen - 1)\numteeth$ elements must appear consecutively in order. It is known that the number of ($213, 321$)-avoiding elements of $S_{\numteeth}$ is ${\numteeth \choose 2} + 1$ \cite{simionschmidt}, so we can conclude that there are ${\numteeth \choose 2} + 1$ ways to order the remaining elements.
\end{proof}

\subsection{Type-{\typeA} $231$- and $312$-avoidance}

\begin{figure}[H]
\begin{center}
\begin{tabular}{c | c | c | c}
  \hline
  {\numteeth} &  
{\numavexts{231, 312}{\combA{\numteeth}{2}}} &   
{\numavexts{231, 312}{\combA{\numteeth}{3}}} &
{\numavexts{231, 312}{\combA{\numteeth}{4}}} \\ \hline
2 & \observedval{3} & \observedval{8} & \extrapval{21} \\ 
3 & \observedval{7} & \observedval{44} & \extrapval{274} \\ 
4 & \observedval{15} & \observedval{208} & \extrapval{2872} \\ 
5 & \observedval{31} & \extrapval{912} & \extrapval{26784} \\ 
6 & \observedval{63} & \extrapval{3840} & \extrapval{233904} \\ 
  \end{tabular}
\end{center}
\caption{{\numavexts{231, 312}{\combA{\numteeth}{\toothlen}}} for various values of {\toothlen} and {\numteeth}}
\end{figure}

\begin{thm}
\label{thm:numavexts231and312combA}

\[
\numavexts{231, 312}{\unevencombA{\numteeth}{\numelts}} = \left\{ \begin{array}{ll}
 1 & \mbox{ if $\numelts \leq \numteeth$,} \\
 2^{\numelts - \numteeth} & \mbox{ if $\numteeth < \numelts < 2\numteeth$,} \\
 2 \cdot \numavexts{231, 312}{\unevencombA{\numteeth}{\numelts - 1}} - & \\
 \mbox{\hspace{10 mm}} \numavexts{231, 312}{\unevencombA{\numteeth}{\numelts - \numteeth - 1}} & \mbox{ if $2\numteeth \leq \numelts$.}
\end{array} \right.  
\]
\end{thm}

\begin{proof}
In any ($231, 312$)-avoiding permutation, any decreasing subsequence must be consecutive (i.e., it must be of the form $\someelt, \someelt-1, \someelt-2, \ldots$).
Note that all of our consecutive decreasing subsequences must appear in increasing order; otherwise, we risk the formation of a non-consecutive decreasing subsequence. 
Since the comb relations require that $\{1, 2, \ldots, \numteeth\}$ appear in increasing order, let's start with the permutation $[1, 2, \ldots, \numteeth]$ and sequentially insert the smallest remaining element of $\{\numteeth + 1, \ldots, \numteeth \toothlen\}$. 
Each new element {\numelts} can only be inserted either directly before the last decreasing subsequence in the existing permutation or in the ultimate position, unless the existing permutation ends with $\numelts - \numteeth$, in which case it can only be inserted in the ultimate position, since $\numelts - \numteeth \leq_{\plaincomb} \numelts$.
This occurs zero times for $\numteeth < \numelts < 2 \numteeth$ (because then we have $\numelts - \numteeth \leq_{\plaincomb} \numteeth$), and exactly 
$\numavexts{231, 312}{\unevencombA{\numteeth}{\numelts - \numteeth - 1}}$ 
times for $\numelts \geq 2 \numteeth$ (since $\numelts - \numteeth$ must have been inserted in the ultimate position and all of $\{\numelts - \numteeth + 1, \ldots, \numelts -1\}$ must have been inserted directly before the last decreasing subsequence in order for $\numelts - \numteeth$ to still be in the ultimate position). 
This shows the recurrence relation stated in Theorem \ref{thm:numavexts231and312combA}.
\end{proof}

In particular, notice that Theorem \ref{thm:numavexts231and312combA} implies that $\numavexts{231, 312}{\combA{\numteeth}{2}} = 2^{\numteeth} - 1$.

\subsection{Type-{\typeA} $231$- and $321$-avoidance}

\begin{figure}[H]
\begin{center}
\begin{tabular}{c | c | c | c}
  \hline
  {\numteeth} &  
{\numavexts{231, 321}{\combA{\numteeth}{2}}}  &   
{\numavexts{231, 321}{\combA{\numteeth}{3}}} &
{\numavexts{231, 321}{\combA{\numteeth}{4}}} \\ \hline
2 & \observedval{3} & \observedval{8} &  \\ 
3 & \observedval{9} & \observedval{57} &  \\ 
4 & \observedval{25} & \observedval{349} &  \\ 
5 & \observedval{65} &  &  \\ 
6 &  &  &  \\ 
  \end{tabular}
\end{center}
\caption{{\numavexts{231, 321}{\combA{\numteeth}{\toothlen}}} for various values of {\toothlen} and {\numteeth}}
\end{figure}

\begin{conjecture}
\label{conjecture:numavexts231and321combAtoothlen2}
$\numavexts{231, 321}{\combA{\numteeth}{2}} = (\numteeth - 1)2^{\numteeth - 1} + 1.$
\end{conjecture}

Conjecture \ref{conjecture:numavexts231and321combAtoothlen2} has not been proven, and moreover, 
enumerating ($231, 321$)-avoiding linear extensions of {\combA{\numteeth}{\toothlen}} for $\toothlen > 2$ remains an open problem.

\subsection{Type-{\typeA} $312$- and $321$-avoidance}

\begin{figure}[H]
\begin{center}
\begin{tabular}{c | c | c | c}
  \hline
  {\numteeth} &  
{\numavexts{312, 321}{\combA{\numteeth}{2}}}  &   
{\numavexts{312, 321}{\combA{\numteeth}{3}}} &
{\numavexts{312, 321}{\combA{\numteeth}{4}}} \\ \hline
2 & \observedval{3} & \observedval{8} & \extrapval{21} \\ 
3 & \observedval{7} & \observedval{44} & \extrapval{274} \\ 
4 & \observedval{15} & \observedval{208} & \extrapval{2872} \\ 
5 & \observedval{31} & \extrapval{912} &\extrapval{26784} \\ 
6 & \observedval{63} & \extrapval{3840} & \extrapval{233904} \\ 
  \end{tabular}
\end{center}
\caption{{\numavexts{312, 321}{\combA{\numteeth}{\toothlen}}} for various values of {\toothlen} and {\numteeth}}
\end{figure}

\begin{thm}
\label{thm:numavexts312and321combA}
\[
\numavexts{312, 321}{\unevencombA{\numteeth}{\numelts}}
 = \left\{ \begin{array}{ll}
 1 & \mbox{ if $\numelts \leq \numteeth$,} \\
 2^{\numelts - \numteeth} & \mbox{ if $\numteeth < \numelts < 2\numteeth$,} \\
 2 \cdot \numavexts{231, 321}{\unevencombA{\numteeth}{\numelts - 1}} -  & \\  
\hspace{10 mm} \numavexts{231, 321}{\unevencombA{\numteeth}{\numelts - \numteeth - 1}} & \mbox{ if $2\numteeth \leq \numelts$.}
\end{array} \right.  
\]
\end{thm}

The proof for Theorem \ref{thm:numavexts312and321combA} is the same as the proof for Theorem \ref{thm:numavexts231and312combA}, with the distinction that each element being inserted can only appear either in the penultimate or the ultimate position, since in a ($312, 321$)-avoiding permutation, no element can be followed by two elements lower than itself.

\section{Avoidance of multiple length-3 patterns in type-{\typeB} comb linear extensions}
\label{sec:avbetatwo}

\subsection{Type-{\typeB} $213$- and $231$-avoidance}

\begin{thm}
\label{thm:numavexts213and231combB}
$\numavexts{213, 231}{\combB{\numteeth}{\toothlen}} = 1$.
\end{thm}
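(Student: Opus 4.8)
The plan is to prove that the identity permutation $[1,2,\ldots,\numteeth\toothlen]$ is the \emph{only} linear extension of $\combB{\numteeth}{\toothlen}$ that avoids both $213$ and $231$. Since the identity is increasing it avoids every length-$3$ pattern, and the paper already records that it is a linear extension of every type-$\typeB$ comb; so establishing uniqueness yields $\numavexts{213, 231}{\combB{\numteeth}{\toothlen}} = 1$.

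The first step is a structural observation about $\{213,231\}$-avoidance: in both $213$ and $231$ the first of the three entries is the one of \emph{middle} value. Hence a permutation $\perm$ avoids both patterns precisely when, for every index $i$, the entry $\perm_i$ is either smaller than all later entries or larger than all later entries --- for if some later entry lay below $\perm_i$ and another above it, those three positions would realize a $213$ or a $231$. I would state and justify this equivalence before using it.

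The second step is to examine the first place a purported extension deviates from the identity. Because $1$ is the global minimum of $\combB{\numteeth}{\toothlen}$, any linear extension $\perm$ has $\perm_1 = 1$. Assume $\perm$ is not the identity and let $i$ be least with $\perm_i \neq i$; then $\perm_1,\ldots,\perm_{i-1}$ are exactly $1,\ldots,i-1$, and $\perm_i = m$ for some $m > i$. The value $i$ occurs at a position after $i$, and $i < m$, so not all entries following position $i$ exceed $\perm_i$; by the observation above they are then all smaller than $m$. Since $\{\perm_i,\ldots,\perm_{\numteeth\toothlen}\} = \{i, i+1, \ldots, \numteeth\toothlen\}$, this forces $m = \numteeth\toothlen$.

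Finally I would invoke the comb relations to reach a contradiction. For $\toothlen > 1$ the two largest values $\numteeth\toothlen - 1$ and $\numteeth\toothlen$ both lie in the last tooth, so $\numteeth\toothlen - 1 \leq_{\plaincomb} \numteeth\toothlen$, and hence $\numteeth\toothlen - 1$ must appear before $\numteeth\toothlen = \perm_i$ in $\perm$; but every position strictly before $i$ carries a value in $\{1,\ldots,i-1\}$, whereas $\numteeth\toothlen - 1 \geq i$ (indeed $> i$, since $i < \numteeth\toothlen$ as $\perm_i = \numteeth\toothlen \neq i$), which is impossible. Therefore no deviation exists and $\perm$ is the identity; the case $\toothlen = 1$ is immediate because $\combB{\numteeth}{1}$ is a chain with a single linear extension. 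I do not expect a genuine obstacle here: the only points requiring care are the equivalence in the second step and the bookkeeping that pins down the prefix and suffix value-sets at the first deviation, after which the tooth relation between the two top values closes the argument.
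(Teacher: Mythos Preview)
Your proof is correct and rests on the same structural fact the paper uses: in a $\{213,231\}$-avoider, each entry is either below everything that follows it or above everything that follows it. The paper phrases this as ``the first element of any rising subsequence is not followed by a smaller element'' and then observes that the comb relations force a rising pair $[\someelt,\someelt+1]$ through every element, concluding in one stroke that the extension must be the identity. You instead package the same observation as a first-deviation argument and invoke only the single relation $\numteeth\toothlen-1 \leq_{\plaincomb} \numteeth\toothlen$ at the end; this is a little longer but perfectly valid, and arguably more explicit than the paper's one-line jump. One tiny slip: from $i < \numteeth\toothlen$ you only get $\numteeth\toothlen - 1 \geq i$, not strict inequality, but this is harmless since $\perm_i = \numteeth\toothlen \neq \numteeth\toothlen - 1$ already forces $\numteeth\toothlen - 1$ to a position strictly after $i$.
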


\begin{proof}
In order for a linear extension of a type-{\typeB} comb to avoid $213$ and $231$, it must hold that the first element of any rising subsequence is not followed by a smaller element. Since the minimal relations of {\combB{\numteeth}{\toothlen}} guarantee that every element is part of a rising subsequence of the form $[\someelt, \someelt + 1]$, this forces all of the elements to appear strictly in increasing order.
\end{proof}

\subsection{Type-{\typeB} $213$- and $312$-avoidance}

\proventable{
\begin{figure}[H]
\begin{center}
\begin{tabular}{c | c | c | c}
  \hline
  {\numteeth} &  
{\numavexts{213, 312}{\combB{\numteeth}{2}}}  &   
{\numavexts{213, 312}{\combB{\numteeth}{3}}} &
{\numavexts{213, 312}{\combB{\numteeth}{4}}} \\ \hline
2 & \observedval{2} & \observedval{2} & \observedval{2} \\ 
3 & \observedval{4} & \observedval{4} & \observedval{4} \\ 
4 & \observedval{8} & \extrapval{8} & \extrapval{8} \\ 
5 & \observedval{16} & \extrapval{16} & \extrapval{16} \\ 
6 & \observedval{32} & \extrapval{32} & \extrapval{32} \\ 
  \end{tabular}
\end{center}
\caption{{\numavexts{213, 312}{\combB{\numteeth}{\toothlen}}} for various values of {\toothlen} and {\numteeth}}
\end{figure}
}

\begin{thm}
\label{thm:numavexts213and312combB}
$\numavexts{213, 312}{\combB{\numteeth}{\toothlen}} = 2^{\numteeth - 1}.$
\end{thm}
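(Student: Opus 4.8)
The plan is to prove the recurrence $\numavexts{213, 312}{\combB{\numteeth}{\toothlen}} = 2 \cdot \numavexts{213, 312}{\combB{\numteeth-1}{\toothlen}}$ for $\numteeth \ge 2$, with base case $\numavexts{213, 312}{\combB{1}{\toothlen}} = 1$ (the unique, increasing linear extension), which unwinds to $2^{\numteeth-1}$. The driving observation is a structural description of $\{213,312\}$-avoidance: a permutation avoids both $213$ and $312$ if and only if every entry is either larger than all entries to its left or larger than all entries to its right, since an occurrence of $213$ or $312$ at positions $i<j<k$ is precisely an entry $\perm_j$ with a strictly larger entry on each side. In particular the minimum of any such permutation must be its first or its last entry, and the same holds for the restriction of the permutation to any block of consecutive positions.

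First I would pin down the forced prefix. Element $1$ is the global minimum of {\combB{\numteeth}{\toothlen}}, hence is first. Inductively, once $1, \dots, k$ occupy the first $k$ positions with $k \le \toothlen - 2$, the remaining suffix is $\{213,312\}$-avoiding with minimum $k+1$; it cannot end with $k+1$ because tooth $0$ forces $k+1$ before $k+2$, so $k+1$ is placed next. Hence $1, \dots, \toothlen-1$ are forced into the first $\toothlen-1$ positions. Now consider $\toothlen$, the maximum of tooth $0$: for $\toothlen > 1$ it is a maximal element of the poset (it is not a spine element), so nothing must follow it, and the suffix-minimum principle leaves exactly two possibilities --- $\toothlen$ sits immediately after $\toothlen - 1$, or $\toothlen$ sits in the very last position.

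Next I would show each choice contributes $\numavexts{213, 312}{\combB{\numteeth-1}{\toothlen}}$. In both cases the elements $\{\toothlen+1, \dots, \numteeth\toothlen\}$ form a block of consecutive positions whose induced subposet is order-isomorphic to {\combB{\numteeth-1}{\toothlen}}, so that block is a linear extension of {\combB{\numteeth-1}{\toothlen}} under the shift by $\toothlen$. It remains to check that the block must be $\{213,312\}$-avoiding and, conversely, that any $\{213,312\}$-avoiding linear extension of {\combB{\numteeth-1}{\toothlen}} reassembles into a valid one: prepending the increasing run $1, \dots, \toothlen-1$ (all of whose entries are smaller than everything after them) cannot create a new $213$ or $312$, and in the second case appending $\toothlen$ at the end cannot either, because $\toothlen$ exceeds only $1, \dots, \toothlen-1$, which already sit at the front in increasing order, so $\toothlen$ can never play the middle or the final role in a forbidden triple. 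This yields a bijection in each case; the two cases are disjoint, being distinguished by whether the last entry equals $\toothlen$ or exceeds it, so the counts add, completing the induction.

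The main obstacle is the bookkeeping in the last step --- verifying carefully that removing or reattaching the forced prefix $1, \dots, \toothlen-1$ (and the trailing $\toothlen$) neither creates nor destroys an occurrence of $213$ or $312$; everything else is routine once the forced-prefix lemma is in place. (As with the neighbouring theorems, the statement is intended for $\toothlen > 1$; for $\toothlen = 1$ the comb is a chain and the count is $1$.)
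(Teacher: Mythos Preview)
Your proof is correct. Both you and the paper begin from the same structural fact about $\{213,312\}$-avoidance---an entry with a larger entry on each side is exactly a $213$ or $312$ occurrence, so such permutations are unimodal (an increasing run followed by a decreasing run)---but you then deploy it differently. The paper applies it globally: since the comb forces $\someelt$ to precede $\someelt+1$ whenever $\someelt \notin \{\toothlen,2\toothlen,\dots,\numteeth\toothlen\}$, every such $\someelt$ lies in the rising run, and the only freedom is which of $\{\toothlen,2\toothlen,\dots,(\numteeth-1)\toothlen\}$ go into the falling run, giving $2^{\numteeth-1}$ in one stroke. You instead apply it locally and recursively: the minimum of each suffix is first or last, forcing the prefix $1,\dots,\toothlen-1$, leaving a binary choice for $\toothlen$, and then a copy of $\combB{\numteeth-1}{\toothlen}$ on the remaining block. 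Your recursion is really the paper's bijection unrolled one tooth at a time (``$\toothlen$ last'' is precisely ``$\toothlen$ in the falling run''), so the two arguments are close cousins; the paper's is shorter, while yours makes the inductive mechanism explicit and does the bookkeeping (that reattaching the prefix and trailing $\toothlen$ creates no new pattern) that the paper leaves implicit. Your closing remark about $\toothlen=1$ is also apt.
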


\begin{proof}
In order for a linear extension of a type-{\typeB} comb to avoid $213$ and $312$, it must have no local minima; in other words, it must consist of a rising sequence followed by a falling sequence. Since any linear extension of a type-{\typeB} comb is constrained to have $\someelt \notin \{\toothlen, 2 \toothlen, \ldots, \numteeth \toothlen\}$ followed by $\someelt + 1$, all such elements $\someelt$ must be a part of the rising sequence. It remains to divide $\{\toothlen, 2 \toothlen, \ldots, (\numteeth - 1) \toothlen\}$ between the rising and falling sequences. (We disregard the element $\numteeth \toothlen$ here because it will be part of both sequences.) There are $2^{\numteeth-1}$ ways to split these $\numteeth - 1$ elements up into two groups, so it follows that $\numavexts{213, 312}{\combB{\numteeth}{\toothlen}} = 2^{\numteeth - 1}$.
\end{proof}

\subsection{Type-{\typeB} $213$- and $321$-avoidance}

\proventable{
\begin{figure}[H]
\begin{center}
\begin{tabular}{c | c | c | c}
  \hline
  {\numteeth} &  
{\numavexts{213, 321}{\combB{\numteeth}{2}}}  &   
{\numavexts{213, 321}{\combB{\numteeth}{3}}} &
{\numavexts{213, 321}{\combB{\numteeth}{4}}} \\ \hline
2 & \observedval{2} & \observedval{3} & \observedval{4} \\ 
3 & \observedval{3} & \observedval{5} & \extrapval{7} \\ 
4 & \observedval{4} & \extrapval{7} & \extrapval{10} \\ 
5 & \observedval{5} & \extrapval{9} & \extrapval{13} \\ 
6 & \observedval{6} & \extrapval{11} & \extrapval{16} \\ 
  \end{tabular}
\end{center}
\caption{{\numavexts{213, 321}{\combB{\numteeth}{\toothlen}}} for various values of {\toothlen} and {\numteeth}}
\end{figure}
}

\begin{thm}
\label{thm:numavexts213and321combB}
$\numavexts{213, 321}{\combB{\numteeth}{\toothlen}} = (\numteeth - 1)(\toothlen - 1) + 1$.
\end{thm}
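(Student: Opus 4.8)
The plan is to characterize the $(213,321)$-avoiding linear extensions of $\combB{\numteeth}{\toothlen}$ completely and then count them directly. First I would recall the structural consequence of avoiding $321$: any such permutation is the union of two increasing subsequences, equivalently it has no decreasing subsequence of length three. Combined with $213$-avoidance, I expect the shape of a valid extension to be very rigid. The key observation is the same one used in Theorems \ref{thm:numavexts213combB} and \ref{thm:numavexts213and231combB}: the minimal relations of $\combB{\numteeth}{\toothlen}$ force each non-spinal element $\someelt$ to be immediately ``supported'' by the rising pair $[\someelt, \someelt+1]$ within its tooth, and the spinal elements $1, \toothlen+1, 2\toothlen+1, \ldots, (\numteeth-1)\toothlen+1$ must occur in this relative order. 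I would argue that, because of $213$-avoidance, once a tooth is ``opened'' (its bottom spinal element placed) the rest of that tooth must follow in increasing order before any strictly smaller element can appear, and because of $321$-avoidance at most one tooth can be ``held back'' out of increasing position relative to the spine at a time.

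Concretely, I would try to show that every $(213,321)$-avoiding linear extension has the following form: for some choice of one tooth index $k \in \{1, \ldots, \numteeth\}$ and one ``height'' $h \in \{1, \ldots, \toothlen\}$ within that tooth, the extension consists of the increasing run of all elements up through a certain point, then the top $\toothlen - h + 1$ or so elements of tooth $k$ pulled out and appended (in increasing order, to respect the tooth relations and avoid $321$), with everything fitting together uniquely. The count $(\numteeth-1)(\toothlen-1)+1$ strongly suggests this bijection: the ``$+1$'' is the identity permutation $[1,2,\ldots,\numteeth\toothlen]$, and the remaining $(\numteeth-1)(\toothlen-1)$ extensions are indexed by a choice of a non-first tooth (there are $\numteeth - 1$ teeth other than the one containing the global maximum, or some similar ``$\numteeth - 1$'' count) together with a choice of how many of that tooth's elements to displace, which ranges over $\toothlen - 1$ nontrivial values. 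I would make this precise by induction on $\numteeth$ (peeling off the first tooth) or on $\numteeth\toothlen$ (using the generating-tree / next-element-insertion framework of Section \ref{sec:numavexts312combB}), showing each valid extension of $\unevencombB{\toothlen}{\numelts}$ admits exactly the predicted insertions.

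I would carry the steps out in this order: (1) prove the decreasing-run and rising-run structural lemmas for $(213,321)$-avoiding permutations restricted by type-$\typeB$ comb relations; (2) deduce that at most one tooth may have any of its elements out of global increasing order, and identify which one and by how much; (3) set up the explicit bijection to the index set $\{\text{identity}\} \cup (\{2,\ldots,\numteeth\} \times \{1, \ldots, \toothlen - 1\})$ (or the analogous set); (4) verify injectivity and surjectivity, i.e. that each such index yields a genuine $(213,321)$-avoiding linear extension and that these are all of them; (5) conclude the count is $(\numteeth-1)(\toothlen-1)+1$. The main obstacle I anticipate is step (2): pinning down precisely why two distinct teeth cannot both be ``displaced'' simultaneously without creating either a $213$ (from a displaced lower element sitting above the reopened spine of a later tooth) or a $321$ (from two nested displaced descents), and handling the boundary interaction when the displaced tooth is the last tooth or contains the global maximum. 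Getting the edge cases of the bijection to match the formula exactly — in particular justifying the exact $\numteeth - 1$ and $\toothlen - 1$ ranges rather than $\numteeth$ and $\toothlen$ — is where the argument will need the most care.
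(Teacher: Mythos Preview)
Your plan is essentially sound and would arrive at the same structural characterization the paper uses, but it is considerably more elaborate than needed, and one detail of your indexing is off.

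The paper's proof is a four-line argument based on a single sharp observation: if $(b,a)$ is any \emph{descent} (adjacent pair with $b>a$) in a $(213,321)$-avoider, then no element larger than $b$ can occur before $b$ (that would give a $321$) or after $a$ (that would give a $213$); since the pair is adjacent, there is no room for a larger element between them either. Hence $b$ must be the global maximum $\numteeth\toothlen$. It follows immediately that there is at most one descent, so the whole permutation is the identity with $\numteeth\toothlen$ possibly slid leftward, and it is determined by the single element (if any) that directly follows $\numteeth\toothlen$. That element can be any of the $(\numteeth-1)(\toothlen-1)$ elements \emph{not} required by the comb relations to precede $\numteeth\toothlen$---i.e., any element outside the spine and outside the last tooth---plus the possibility that nothing follows $\numteeth\toothlen$ (the identity). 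This bypasses your steps (1)--(4), the induction, and the generating-tree machinery entirely.

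Your bijection in step (3) has the right shape, but the ``non-first tooth'' description is backwards: the displaced tooth must be any tooth \emph{other than the last} (the one containing the global maximum $\numteeth\toothlen$), since every element of the last tooth is $\leq_{\plaincomb} \numteeth\toothlen$ and hence must precede it. Your parenthetical ``teeth other than the one containing the global maximum'' is the correct version; ``non-first'' is not.
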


\begin{proof}
In order for a linear extension of a type-{\typeB} comb to avoid $213$ and $321$, no decreasing two-element subsequence can be preceded or followed by an element larger than the first element of the subsequence. It follows that any decreasing two-element subsequence must have $\numteeth \toothlen$ as the first of its two elements and that there can be only one such decreasing subsequence; all the other elements must appear in order. $\numteeth \toothlen$ can either be the ultimate element or be followed by one of the elements not constrained by the minimal relations of {\combB{\numteeth}{\toothlen}} to appear before it, of which there are $(\numteeth - 1)(\toothlen - 1)$.
\end{proof}

\subsection{Type-{\typeB} $231$- and $312$-avoidance}

\proventable{
\begin{figure}[H]
\begin{center}
\begin{tabular}{c | c | c | c}
  \hline
  {\numteeth} &  
{\numavexts{231, 312}{\combB{\numteeth}{2}}}  &   
{\numavexts{231, 312}{\combB{\numteeth}{3}}} &
{\numavexts{231, 312}{\combB{\numteeth}{4}}} \\ \hline
2 & \observedval{2} & \observedval{2} & \observedval{2} \\ 
3 & \observedval{4} & \observedval{4} & \extrapval{4} \\ 
4 & \observedval{8} & \extrapval{8} & \extrapval{8} \\ 
5 & \observedval{16} & \extrapval{16} & \extrapval{16} \\ 
6 & \observedval{32} & \extrapval{32} & \extrapval{32} \\ 
  \end{tabular}
\end{center}
\caption{{\numavexts{231, 312}{\combB{\numteeth}{\toothlen}}} for various values of {\toothlen} and {\numteeth}}
\end{figure}
}

\begin{thm}
\label{thm:numavexts231and312combB}
$\numavexts{231, 312}{\combB{\numteeth}{\toothlen}} = 2^{\numteeth - 1}$.
\end{thm}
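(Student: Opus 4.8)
The plan is to get a clean structural description of $\{231,312\}$-avoiding permutations and then intersect it with the linear-extension constraints of $\combB{\numteeth}{\toothlen}$. The key fact (essentially the one already used in the proof of Theorem~\ref{thm:numavexts231and312combA}) is this: a permutation $\perm\in S_{\numelts}$ avoids both $231$ and $312$ if and only if, read left to right, it is the concatenation $D_1D_2\cdots D_t$ of its maximal descending runs, where each $D_\ell$ is an interval of consecutive integers written in decreasing order and $D_1<D_2<\cdots<D_t$ as sets (so the run lengths form a composition of $\numelts$ and the runs exhaust $\{1,\dots,\numelts\}$ from below). I would prove the forward direction by observing that a ``gap'' inside a maximal descending run forces the skipped value to sit either before that run (creating a $231$) or after it (creating a $312$), and that if two consecutive runs were not in increasing order the run boundary would fail to be an ascent; the converse is a short case check over the three relative positions of a chosen length-$3$ subsequence.

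Next I would intersect this with the defining relations of $\combB{\numteeth}{\toothlen}$: each tooth $\{c\toothlen+1,\dots,c\toothlen+\toothlen\}$ must occur in increasing order, and the spine $1\leq_{\plaincomb}\toothlen+1\leq_{\plaincomb}\cdots\leq_{\plaincomb}(\numteeth-1)\toothlen+1$ must occur in order. A descending run $\{m,m+1,\dots,m'\}$ places $m+1$ before $m$, so no run of a valid linear extension may contain two integers of the same tooth. Among any three consecutive integers at most one adjacent pair straddles a tooth boundary, so for $\toothlen>1$ this forces every run to have size at most $2$, and a size-$2$ run must be $\{c\toothlen,c\toothlen+1\}$ for some $c\in\{1,\dots,\numteeth-1\}$. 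Conversely, for any subset $T\subseteq\{1,\dots,\numteeth-1\}$, the permutation in which $\{c\toothlen,c\toothlen+1\}$ is a descending run exactly for $c\in T$ and every other value forms its own run is (a) a linear extension of $\combB{\numteeth}{\toothlen}$ --- the tooth relations hold since no tooth contributes two elements to one run, and the spine relations hold since each spinal element $c\toothlen+1$ is the largest element of its run and so heads it, leaving the spine in increasing position order --- and (b) $\{231,312\}$-avoiding by the structural fact. Thus $T\mapsto\perm$ is a bijection from subsets of a $(\numteeth-1)$-element set to $\{231,312\}$-avoiding linear extensions of $\combB{\numteeth}{\toothlen}$, giving $\numavexts{231,312}{\combB{\numteeth}{\toothlen}}=2^{\numteeth-1}$.

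The step I expect to be most delicate is the converse/uniqueness bookkeeping: verifying that the ``transpose a chosen subset of the boundary pairs $(c\toothlen,c\toothlen+1)$'' permutations really are linear extensions --- the one subtle point is that inside a size-$2$ run the spinal element $c\toothlen+1$ comes first while the top $c\toothlen$ of the preceding tooth comes second, which is legal only because $c\toothlen$ and $c\toothlen+1$ are incomparable in $\combB{\numteeth}{\toothlen}$ --- and that no permutation outside this family survives both sets of constraints. I would also flag the boundary case $\toothlen=1$, where $\combB{\numteeth}{1}$ has only the identity extension, so the statement is understood for $\toothlen>1$ (as with the neighbouring theorems). Finally, it is worth remarking that the answer is independent of $\toothlen$ --- matching Theorem~\ref{thm:numavexts213and312combB} --- because the only freedom that $\{231,312\}$-avoidance leaves in a type-$\typeB$ extension is the independent binary choice, for each spinal element past the first, of whether to pull it one step ahead of the preceding tooth's top.
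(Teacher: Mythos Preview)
Your proposal is correct and follows essentially the same approach as the paper: characterize $\{231,312\}$-avoiding permutations as ``layered'' permutations (concatenations of decreasing consecutive-integer runs, the runs themselves in increasing order), then use the tooth relations to force all runs to be singletons except possibly the $\numteeth-1$ boundary pairs $\{c\toothlen,c\toothlen+1\}$, yielding the $2^{\numteeth-1}$ binary choices. Your write-up is in fact more careful than the paper's --- you justify the structural lemma, verify both directions of the bijection, and flag the $\toothlen=1$ degeneracy --- whereas the paper's sentence ``elements of the form $\someelt+1$ where $\someelt$ is not divisible by $\toothlen$ must all form their own length-one consecutive decreasing subsequence'' is slightly off (it would force $c\toothlen$ to be a singleton, contradicting the very next sentence), though the intended argument and conclusion are the same as yours.
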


\begin{proof}
In order for a linear extension of a type-{\typeB} comb to avoid $231$ and $312$, any decreasing subsequence must be consecutive. It follows that these consecutive decreasing subsequences must appear in increasing order.
The minimal relations of {\combB{\numteeth}{\toothlen}} force elements {\someelt} not divisible by {\toothlen} to appear before $\someelt + 1$; thus, elements of the form $\someelt +1$ where {\someelt} is not divisible by {\toothlen} must all form their own length-$1$ consecutive decreasing subsequence. However, each element $\someelt < \numteeth \toothlen$ divisible by $\toothlen$ can appear either directly before or directly after $\someelt + 1$. Since there are $\numteeth - 1$ such elements {\someelt}, it follows that $\numavexts{231, 312}{\combB{\numteeth}{\toothlen}} = 2^{\numteeth - 1}$.
\end{proof}

\subsection{Type-{\typeB} $231$- and $321$-avoidance}

\proventable{
\begin{figure}[H]
\begin{center}
\begin{tabular}{c | c | c | c}
  \hline
  {\numteeth} &  
{\numavexts{231, 321}{\combB{\numteeth}{2}}} &   
{\numavexts{231, 321}{\combB{\numteeth}{3}}} &
{\numavexts{231, 321}{\combB{\numteeth}{4}}} \\ \hline
2 & \observedval{2} & \observedval{3} & \observedval{4} \\ 
3 & \observedval{4} & \observedval{9} & \extrapval{16} \\ 
4 & \observedval{8} & \extrapval{27} & \extrapval{64} \\ 
5 & \observedval{16} & \extrapval{81} & \extrapval{256} \\ 
6 & \observedval{32} & \extrapval{243} & \extrapval{1024} \\ 
  \end{tabular}
\end{center}
\caption{{\numavexts{231, 321}{\combB{\numteeth}{\toothlen}}} for various values of {\toothlen} and {\numteeth}}
\end{figure}
}

\begin{thm}
\label{thm:numavexts231and321combB}
$\numavexts{231, 321}{\combB{\numteeth}{\toothlen}} = {\toothlen}^{\numteeth - 1}$.
\end{thm}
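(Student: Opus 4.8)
The plan is to prove the identity by induction on the number of teeth $\numteeth$, establishing the recurrence $\numavexts{231,321}{\combB{\numteeth+1}{\toothlen}} = \toothlen \cdot \numavexts{231,321}{\combB{\numteeth}{\toothlen}}$, in the spirit of the proof of Theorem \ref{thm:numavexts213combB}. The base case $\numteeth = 1$ is immediate: $\combB{1}{\toothlen}$ is a chain whose unique linear extension $[1,2,\ldots,\toothlen]$ is pattern-free, so the count is $1 = \toothlen^{0}$. The one fact about $\{231,321\}$-avoidance I will use is the elementary characterization that a permutation avoids both $231$ and $321$ if and only if every entry has at most one larger entry to its left --- two larger entries preceding a given entry form a $231$ or a $321$ according to their relative order.

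For the inductive step, take a $\{231,321\}$-avoiding linear extension $E = [\someelt_1, \ldots, \someelt_{\toothlen\numteeth}]$ of $\combB{\numteeth}{\toothlen}$; note $\someelt_1 = 1$ since $1$ is the root. Shifting up by $\toothlen$ gives $E' = [\someelt_1+\toothlen, \ldots, \someelt_{\toothlen\numteeth}+\toothlen]$, a $\{231,321\}$-avoiding arrangement of $\{\toothlen+1, \ldots, (\numteeth+1)\toothlen\}$ that realizes teeth $1, \ldots, \numteeth$ of $\combB{\numteeth+1}{\toothlen}$ and begins with $\toothlen+1$. A linear extension of $\combB{\numteeth+1}{\toothlen}$ is formed by placing $1$ first (forced) and interleaving the chain $2, 3, \ldots, \toothlen$ of tooth $0$ into $E'$ (the poset imposes no constraint between these and $E'$). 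Since each of $2, \ldots, \toothlen$ is smaller than every entry of $E'$, the characterization forces each of them to have at most one entry of $E'$ before it, and that entry --- if present --- must be $\toothlen+1$, the first entry of $E'$ (otherwise two entries of $E'$ precede it and create a $231$ or $321$). As $2, \ldots, \toothlen$ occur in this order, the extension is pinned to one of exactly $\toothlen$ shapes, $[1, 2, \ldots, m, \toothlen+1, m+1, \ldots, \toothlen, \someelt_2+\toothlen, \ldots, \someelt_{\toothlen\numteeth}+\toothlen]$ for $m \in \{1, \ldots, \toothlen\}$, and each of these is readily checked to be a valid $\{231,321\}$-avoiding linear extension of $\combB{\numteeth+1}{\toothlen}$.

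To complete the recurrence I would then verify the converse: an arbitrary $\{231,321\}$-avoiding linear extension $F$ of $\combB{\numteeth+1}{\toothlen}$ has one of these $\toothlen$ shapes, so deleting the elements $1, \ldots, \toothlen$ and shifting down recovers a unique $(E, m)$. The key point is that $\toothlen+1$ is forced to be the least (hence first-occurring) of the teeth-$1,\ldots,\numteeth$ elements in $F$, so any element of $\{2, \ldots, \toothlen\}$ with one predecessor from that block has $\toothlen+1$ as that predecessor and therefore precedes the next such element $2\toothlen+1$. Together with the monotonicity of "how many teeth-$1,\ldots,\numteeth$ elements precede $g$" as $g$ runs through $2, 3, \ldots, \toothlen$, this leaves only the $\toothlen$ listed shapes. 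This yields a bijection between $\{231,321\}$-avoiding linear extensions of $\combB{\numteeth}{\toothlen}$ paired with $\{1, \ldots, \toothlen\}$ and those of $\combB{\numteeth+1}{\toothlen}$, giving the recurrence and hence $\numavexts{231,321}{\combB{\numteeth}{\toothlen}} = \toothlen^{\numteeth-1}$.

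The step I expect to be the main obstacle is this converse direction --- showing an arbitrary $\{231,321\}$-avoiding extension is forced into one of the $\toothlen$ rigid shapes (in particular that no element of $\{2, \ldots, \toothlen\}$ can sit "deep" inside $E'$, past its second spine element); everything else is bookkeeping. A secondary point requiring care is the count itself: because the number of $E'$-entries preceding $2, 3, \ldots, \toothlen$ is non-decreasing with values in $\{0,1\}$, there are exactly $\toothlen$ admissible thresholds, no more.
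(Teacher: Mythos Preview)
Your proof is correct and follows essentially the same approach as the paper: both arguments add the smallest tooth to an arrangement of the larger elements, use the characterization that $\{231,321\}$-avoidance means each entry has at most one larger entry to its left, and conclude that the new tooth admits exactly $\toothlen$ placements (determined by how many of its elements land after the first existing element $\toothlen+1$). The paper phrases this as an iterative insertion of teeth from largest to smallest, while you frame the same step as an explicit induction with a shift; your treatment of the converse direction is more careful than the paper's, but the underlying idea is identical.
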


\begin{proof}
Begin with the empty permutation and sequentially insert the largest {\toothlen} remaining elements of $\{1, \ldots, \toothlen \numteeth\}$. 

Each set of elements we insert will have to appear in order, since the elements form a tooth. We can insert $\{(\numteeth - 1)\toothlen + 1, \ldots, \numteeth \toothlen\}$ in exactly one way: $[(\numteeth - 1)\toothlen + 1, \ldots, \numteeth \toothlen]$.
Since in a $(231, 321)$-avoiding permutation no element can be preceded by two elements greater than itself, each following set of elements can be inserted in one of {\toothlen} ways: with $0, 1, \ldots, \toothlen - 2$ or $\toothlen - 1$ new elements succeeding the first element already in the permutation and all of the other elements preceding it. 
All {\toothlen} new elements cannot succeed the first element already in the permutation because of the spinal minimal relations of {\combB{\numteeth}{\toothlen}}.

It follows that $\numavexts{231, 321}{\comb{\numteeth}{\toothlen}} = {\toothlen}^{\numteeth - 1}$.
\end{proof}

\subsection{Type-{\typeB} $312$- and $321$-avoidance}

\proventable{
\begin{figure}[H]
\begin{center}
\begin{tabular}{c | c | c | c}
  \hline
  {\numteeth} &  
{\numavexts{312, 321}{\combB{\numteeth}{2}}}  &   
{\numavexts{312, 321}{\combB{\numteeth}{3}}} &
{\numavexts{312, 321}{\combB{\numteeth}{4}}} \\ \hline
2 & \observedval{3} & \observedval{4} & \observedval{5} \\ 
3 & \observedval{9} & \observedval{16} & \extrapval{25} \\ 
4 & \observedval{27} & \extrapval{64} & \extrapval{125} \\ 
5 & \observedval{81} & \extrapval{256} & \extrapval{625} \\ 
6 & \observedval{243} & \extrapval{1024} & \extrapval{3125} \\ 
  \end{tabular}
\end{center}
\caption{{\numavexts{312, 321}{\combB{\numteeth}{\toothlen}}} for various values of {\toothlen} and {\numteeth}}
\end{figure}
}

\begin{thm}
\label{thm:numavexts312and321combB}
$\numavexts{312, 321}{\combB{\numteeth}{\toothlen}} = {(\toothlen + 1)}^{\numteeth - 1}$.
\end{thm}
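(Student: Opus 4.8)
The plan is to prove the identity by induction on the number {\numteeth} of teeth. The base case $\numavexts{312, 321}{\combB{1}{\toothlen}} = 1$ is immediate, since $[1,2,\ldots,\toothlen]$ is the only linear extension of $\combB{1}{\toothlen}$ and it avoids every pattern. For the inductive step I would show $\numavexts{312, 321}{\combB{\numteeth+1}{\toothlen}} = (\toothlen+1)\cdot\numavexts{312, 321}{\combB{\numteeth}{\toothlen}}$, which yields the formula $(\toothlen+1)^{\numteeth-1}$. Throughout I would use the elementary fact that a permutation avoids both $312$ and $321$ if and only if no entry has two smaller entries after it (a triple of positions $i<j<k$ with $v_i>v_j$ and $v_i>v_k$ realizes $312$ or $321$ according to the order of $v_j,v_k$, and conversely); and, in the spirit of the proofs of Theorems \ref{thm:numavexts213combB} and \ref{thm:numavexts231and321combB}, I would realize $\combB{\numteeth+1}{\toothlen}$ as $\combB{\numteeth}{\toothlen}$ with one new \emph{largest} tooth $T=\{\numteeth\toothlen+1,\ldots,(\numteeth+1)\toothlen\}$ adjoined, so that deleting $T$ from a $312,321$-avoiding linear extension of $\combB{\numteeth+1}{\toothlen}$ always leaves a $312,321$-avoiding linear extension of $\combB{\numteeth}{\toothlen}$.

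The core claim is that each $312,321$-avoiding linear extension $E=[\someelt_1,\ldots,\someelt_{\numteeth\toothlen}]$ of $\combB{\numteeth}{\toothlen}$ can be enlarged to such an extension of $\combB{\numteeth+1}{\toothlen}$ in exactly $\toothlen+1$ ways by inserting $T$. First I would argue that the new spine element $s=\numteeth\toothlen+1$, being larger than every entry of $E$, can have at most one entry of $E$ after it; since for $\toothlen>1$ the element $(\numteeth-1)\toothlen+2$ must follow the previous spine element $(\numteeth-1)\toothlen+1$ in $E$, that spine element is not $\someelt_{\numteeth\toothlen}$, and so $s$ has exactly two admissible positions: after $\someelt_{\numteeth\toothlen}$ (Case A) or between $\someelt_{\numteeth\toothlen-1}$ and $\someelt_{\numteeth\toothlen}$ (Case B). The remaining entries $\numteeth\toothlen+2<\cdots<(\numteeth+1)\toothlen$ of $T$ must then appear in this order, each after $s$. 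In Case A everything after $s$ consists solely of those entries, so they are forced to the very end, giving a single extension; and since all adjoined entries exceed everything in $E$, avoidance is automatic. In Case B the unique entry of $E$ after $s$ is $\someelt_{\numteeth\toothlen}$, which is smaller than every entry of $T$; hence $\someelt_{\numteeth\toothlen}$ can be inserted into any one of the $\toothlen$ gaps of the increasing block $\numteeth\toothlen+2,\ldots,(\numteeth+1)\toothlen$, and in each case every entry (old or new) still has at most one smaller entry after it. This yields $1+\toothlen$ extensions; conversely, in any $312,321$-avoiding extension of $\combB{\numteeth+1}{\toothlen}$ restricting to $E$, the position of $s$ (last or second-to-last among $E$'s entries) determines the case and the position of $\someelt_{\numteeth\toothlen}$ determines the rest, so each such extension is obtained exactly once. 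Summing over all $E$ completes the step.

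The step I expect to demand the most care is the Case B verification that \emph{every} such interleaving is simultaneously a linear extension of $\combB{\numteeth+1}{\toothlen}$ and $312,321$-avoiding: one must check that relocating $\someelt_{\numteeth\toothlen}$ past some adjoined entries changes neither its order relative to any entry of $E$ (so the comb relations and $E$'s own avoidance are untouched) nor the count of smaller-entries-after for any entry, old or new. The bookkeeping is routine once one uses that the adjoined entries are globally largest and occur in increasing order, but it is exactly where the hypothesis $\toothlen>1$ and the spine constraint forcing $s$ into one of only two slots are used, so I would carry it out in full rather than leave it to the reader.
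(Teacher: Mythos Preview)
Your proposal is correct and takes essentially the same approach as the paper: build up the comb one tooth at a time from smallest to largest, and show that appending the new largest tooth can be done in exactly $\toothlen+1$ ways compatible with the poset relations and with $312,321$-avoidance. The paper phrases this as ``$0,1,\ldots,\toothlen$ new elements preceding the last element already in the permutation,'' which is exactly your Case~A (zero new elements before $\someelt_{\numteeth\toothlen}$) together with the $\toothlen$ choices in your Case~B; your write-up simply makes the induction and the avoidance verification more explicit than the paper does.
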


\begin{proof}
Let's show this by beginning with the empty permutation and sequentially inserting the smallest {\toothlen} remaining elements of $\{1, \ldots, \toothlen \numteeth\}$. 

Each set of elements we insert will have to appear in order, since the elements form a tooth. We can insert $\{1, \ldots, \toothlen\}$ in exactly one way: $[1, \ldots, \toothlen]$.
Since in a $(312, 321)$-avoiding permutation no element can be succeeded by two elements smaller than itself, each following set of elements can be inserted in one of $\toothlen + 1$ ways; with $0, 1, \ldots, \toothlen - 1$ or $\toothlen$ new elements preceding the last element already in the permutation, and all of the other elements succeeding it. 

It follows that $\numavexts{312, 321}{\combB{\numteeth}{\toothlen}} = {(\toothlen + 1)}^{\numteeth - 1}$.
\end{proof}

\section{Conclusion}

Even within pattern-avoiding linear extensions of combs for patterns of length $3$, there is more work to be done:
the enumeration of $231$-avoiding extensions of type-{\typeA} combs and the enumeration of $312$-avoiding extensions of type-{\typeA} combs remain open problems.

Beyond that, there are a lot of exciting threads to be followed. Longer patterns can be considered, and other --- ideally more general --- classes of posets can be explored.

\section{Acknowledgements}

I would like to thank Professor Richard Stanley for his support, advice and patience. 
His Combinatorial Analysis class (MIT class number 18.314) is what sparked my interest in permutation patterns, and he has continued to mentor me since.
I would also like to than Professor Leonid Reyzin for his guidance on writing papers, 
Nina Shteingold for her illustration, 
and Vladimir Shander as well as the anonymous JoC reviewers for their insightful feedback.

\section{Contact Information}

Please feel free to contact the author with any questions or comments via email at sonka89@mit.edu.

\bibliographystyle{plain}
\bibliography{21_bibliography}

\end{document}